\documentclass[a4paper]{article}

\usepackage{graphicx}
\usepackage{amsmath, amssymb, amsfonts, amsthm}
\usepackage{enumerate}
\usepackage[all]{xy}
\usepackage{mathtools}
\usepackage{hyperref}

\DeclareMathOperator{\tr}{tr} 
\DeclareMathOperator{\e}{e}   
\DeclareMathOperator{\sign}{sign}

\DeclareMathOperator{\dive}{div}

\newtheorem{thm}{Theorem}
\newtheorem{prop}[thm]{Proposition}
\newtheorem{lem}[thm]{Lemma}

\newcommand{\R}{{\mathbb R}}

\newcommand{\cX}{{\sf X}}
\newcommand{\cZ}{{\sf 0}}

\newcommand{\cdts}{\quad \cdots \quad}
\newcommand{\arr}[2]{\xrightleftharpoons[#2]{#1}}

\def\a{a}
\def\b{b}

\newcommand{\al}{\alpha}
\newcommand{\be}{\beta}

\parindent0ex
\parskip1ex

\makeatletter
\def\blfootnote{\xdef\@thefnmark{}\@footnotetext}
\makeatother


\begin{document}

\title{%
Planar S-systems: \\
Global stability and the center problem
}

\author{
Bal\'azs~Boros, Josef~Hofbauer, Stefan~M\"uller$^*$, \\ Georg Regensburger
}



\maketitle

\begin{abstract}
\noindent
S-systems are simple examples of power-law dynamical systems (polynomial systems with real exponents).
For planar S-systems, we study global stability of the unique positive equilibrium and solve the center problem.
Further, we construct a planar S-system with two limit cycles. \\[1ex]
{\bf Keywords:} power-law systems,
center-focus problem,
first integrals, reversible systems, focal values,
global centers, limit cycles,
Andronov-Hopf bifurcation, Bautin bifurcation \\[1ex]
{\bf AMS subject classification:} 34C05, 34C07, 34C14, 34C23, 80A30, 92C42
\end{abstract}

\blfootnote{
\scriptsize

\noindent
{Bal\'azs~Boros $\cdot$ Josef~Hofbauer $\cdot$ Stefan~M\"uller} \\
Department of Mathematics, University of Vienna, Oskar-Morgenstern-Platz 1, 1090 Wien, Austria \\[1ex]
{Georg Regensburger} \\
Institute for Algebra, Johannes Kepler University Linz, Altenbergerstrasse 69, 4040 Linz, Austria \\[1ex]
$^*$Corresponding author \\
\href{mailto:st.mueller@univie.ac.at}{st.mueller@univie.ac.at}

}


\section{Introduction}

An S-system is a dynamical system on the positive orthant for which the right hand side is given by {\em binomials} (differences of monomials) with real exponents.
S-systems were introduced by Savageau~\cite{Savageau1969,Savageau1969b} in the context of biochemical systems theory.
For a recent review and an extensive list of references, see~\cite{Voit2013}.
In biochemical systems theory, one also considers dynamical systems given by {\em polynomials} with real exponents (power-law systems).  

As already observed in~\cite{Savageau1969b},
the binomial structure of an S-system allows to reduce the computation of positive equilibria to linear algebra by taking the logarithm.
In particular, it is easy to characterize when such a dynamical system has a unique positive equilibrium.
At the same time, 
already a {\em planar} S-system with a unique positive equilibrium may give rise to rich dynamical behaviour,
as demonstrated in this paper.

Andronov-Hopf bifurcations of planar S-systems are discussed in~\cite{lewis:1991},
and the first focal value is used to construct a stable limit cycle in~\cite{yin:voit:2008}. 
In fact, the first mathematical model of glycolytic oscillations by Selkov~\cite{Selkov68} is a planar S-system.
In previous work, we studied planar S-systems arising from a chemical reaction network (the Lotka reactions) with power-law kinetics.
A global stability analysis is given in~\cite{boros:hofbauer:mueller:2017},
and the center problem is solved in~\cite{boros:hofbauer:mueller:regensburger:2017}.
(For an interpretation of S-systems as dynamical systems
arising from chemical reaction networks with generalized mass-action kinetics~\cite{mueller:regensburger:2014},
we refer the reader to Appendix A.)

In this paper, we provide a global stability analysis of arbitrary planar S-systems (Section~\ref{sec:stability}).
In particular, we characterize the real exponents for which the unique equilibrium of a planar S-system is globally stable for all positive coefficients.
Further, we determine the parameters 
for which the unique equilibrium is a center (Section~\ref{sec:center_problem}).
In particular, we characterize global centers (Subsection~\ref{subsec:global_center}),
and finally we construct a planar S-system with two limit cycles bifurcating from a center (Subsection~\ref{subsec:limit}).
It remains open whether there exist planar S-systems with more than two limit cycles,
and we discuss a ``fewnomial version'' of Hilbert's 16th problem asking for an upper bound on the number of limit cycles for planar power-law systems in terms of the number of monomials. 
For an illustration of our analysis, we provide figures in Appendix B.

In the following section, we introduce planar S-systems, bring them into exponential form, and discuss the resulting symmetries.

\section{Planar S-systems} \label{sec:ss}

A planar S-system is given by
\begin{align} \label{ode:Ssys}
\dot x_1 &= \al_1 \, x_1^{g_{11}} x_2^{g_{12}} - \be_1 \, x_1^{h_{11}} x_2^{h_{12}}  , \\
\dot x_2 &= \al_2 \, x_1^{g_{21}} x_2^{g_{22}} - \be_2 \, x_1^{h_{21}} x_2^{h_{22}}   \nonumber
\end{align}
with $\alpha_1, \alpha_2, \beta_1, \beta_2 \in \mathbb{R}_+$ and $g_{11},g_{12},g_{21},g_{22},h_{11},h_{12},h_{21},h_{22} \in \R$.
Since we allow real exponents, we study the dynamics on the positive quadrant $\R_+^2$.

We assume that the ODE~\eqref{ode:Ssys} admits a positive equilibrium $(x_1^*,x_2^*)$,
and use the equilibrium to scale the ODE.
We obtain
\begin{align} \label{ode:Ssys_scaled}
\dot x_1 &= \gamma_1 \left( x_1^{g_{11}} x_2^{g_{12}} - x_1^{h_{11}} x_2^{h_{12}}  \right) , \\
\dot x_2 &= \gamma_2 \left( x_1^{g_{21}} x_2^{g_{22}} - x_1^{h_{21}} x_2^{h_{22}}  \right) , \nonumber
\end{align}
where $\gamma_1 = \al_1 (x_1^*)^{g_{11}-1} (x_2^*)^{g_{12}}$ and $\gamma_2 = \al_2 (x_1^*)^{g_{21}} (x_2^*)^{g_{22}-1}$.
The ODE~\eqref{ode:Ssys_scaled} admits the equilibrium $(1,1)$.

By a nonlinear transformation, we obtain a planar system with the origin as the unique equilibrium. 
In this exponential form, nullclines are straight lines and symmetries in the exponents can be exploited. 

\subsection{Exponential form}  \label{subsec:exponential}

Given the ODE~\eqref{ode:Ssys_scaled}, we perform the nonlinear transformations 
\[
x_1=\e^{\gamma_1 u}\quad \text{and} \quad x_2=\e^{\gamma_2 v}
\]
and obtain
\begin{align} \label{ode:exp}
\dot u &= \e^{\a_1 u + \b_1 v} - \e^{\a_2 u + \b_2 v} , \\
\dot v &= \e^{\a_3 u + \b_3 v} - \e^{\a_4 u + \b_4 v} , \nonumber
\end{align}
defined on $\R^2$, where
\begin{alignat}{3}\label{params_ab_gh}
\a_1 &= \gamma_1 (g_{11}-1), \quad & \b_1&=\gamma_2 g_{12}, \\
\a_2 &= \gamma_1 (h_{11}-1), & \b_2&=\gamma_2 h_{12}, \nonumber \\
\a_3 &= \gamma_1 g_{21}, & \b_3&=\gamma_2(g_{22}-1), \nonumber \\
\a_4 &= \gamma_1 h_{21}, & \b_4&=\gamma_2(h_{22}-1). \nonumber
\end{alignat}
The ODE~\eqref{ode:exp} admits the equilibrium $(0,0)$,
and the Jacobian matrix at $(0,0)$ is given by
\begin{equation} \label{eq:J}
J =
\begin{pmatrix}
\a_1-\a_2 & \b_1-\b_2 \\
\a_3-\a_4 & \b_3-\b_4
\end{pmatrix} .
\end{equation}

We abbreviate the ODE~\eqref{ode:exp} by its 8 parameters, more specifically, by the scheme
\begin{equation} \label{params}
\begin{pmatrix}
\a_1 & \a_2 & \a_3 & \a_4 \\
\b_1 & \b_2 & \b_3 & \b_4 
\end{pmatrix} .
\end{equation}

For any $\a,\b \in \R$, the ODE abbreviated by the parameter scheme 
\begin{equation*}
\begin{pmatrix}
\a_1-\a & \a_2-\a & \a_3-\a & \a_4-\a \\
\b_1-\b & \b_2-\b & \b_3-\b & \b_4-\b 
\end{pmatrix}
\end{equation*}
is obtained by multiplying the vector field in the ODE~\eqref{ode:exp} with $e^{-\a u-\b v}$ and is hence orbitally equivalent to \eqref{ode:exp}.
Thus the number of parameters could be reduced from 8 to 6.

Applying a uniform scaling transformation $(u,v) \mapsto (cu, cv)$ with $c > 0$ 
and rescaling time accordingly
is equivalent to dividing all parameters by~$c$.
Hence, 
the parameter space could be reduced to a 5-dimensional compact manifold. 
 

\subsection{Symmetry operations} \label{subsec:sym}

In the proofs of our main results (Sections~\ref{sec:stability} and~\ref{sec:center_problem}),
we exploit symmetries in the parameters, in order to avoid tedious case distinctions.

In fact, the family of ODEs~\eqref{ode:exp} is invariant under the symmetry group of the square (the dihedral group $D_4$) which consists of the following eight elements (rotations and reflections in $\R^2$):
\[
\begin{array}{llll}
 \mathbf{r}_0=\begin{pmatrix*}[r]  1 &  0 \\  0 &  1 \end{pmatrix*},
&\mathbf{r}_1=\begin{pmatrix*}[r]  0 & -1 \\  1 &  0 \end{pmatrix*},
&\mathbf{r}_2=\begin{pmatrix*}[r] -1 &  0 \\  0 & -1 \end{pmatrix*},
&\mathbf{r}_3=\begin{pmatrix*}[r]  0 &  1 \\ -1 &  0 \end{pmatrix*},\\[4ex]
 \mathbf{s}_0=\begin{pmatrix*}[r]  1 &  0 \\  0 & -1 \end{pmatrix*},
&\mathbf{s}_1=\begin{pmatrix*}[r]  0 &  1 \\  1 &  0 \end{pmatrix*},
&\mathbf{s}_2=\begin{pmatrix*}[r] -1 &  0 \\  0 &  1 \end{pmatrix*},
&\mathbf{s}_3=\begin{pmatrix*}[r]  0 & -1 \\ -1 &  0 \end{pmatrix*}.
\end{array}
\]

The question arises how these symmetry operations transform the ODE~\eqref{ode:exp}.
We start with $\mathbf{r}_1$, the rotation by $90^{\circ}$.
For $(U,V) = \mathbf{r}_1(u,v)$, that is, $U = -v, V = u$, we obtain
\begin{alignat*}{3}
\dot U &= -\dot v &&= \e^{\a_4 u + \b_4 v} - \e^{\a_3 u + \b_3 v} &&= \e^{-\b_4 U +  \a_4 V } - \e^{ -\b_3 U +  \a_3 V} , \\
\dot V &= \dot u &&= \e^{\a_1 u + \b_1 v} - \e^{\a_2 u + \b_2 v} &&= \e^{-\b_1 U +  \a_1 V } - \e^{ -\b_2 U +  \a_2 V} . \nonumber
\end{alignat*}
So $\mathbf{r}_1$ transforms the ODE~\eqref{ode:exp}, abbreviated by the parameter scheme~\eqref{params}, into the ODE abbreviated by
\begin{equation} \label{params_r1}
\begin{pmatrix*}[r]
- \b_4 & - \b_3 & - \b_1 & - \b_2  \\
  \a_4 &   \a_3 &   \a_1 &   \a_2 
\end{pmatrix*} .
\end{equation}

The other operations transform the parameter scheme~\eqref{params} as follows:

\begin{align}
\label{params_r2}
\mathbf{r}_2 &\colon \qquad
\begin{pmatrix*}[r]
- \a_2 & - \a_1 & - \a_4 & - \a_3 \\
- \b_2 & - \b_1 & - \b_4 & - \b_3 
\end{pmatrix*}\\[2ex]
\label{params_r3}
\mathbf{r}_3 &\colon \qquad
\begin{pmatrix*}[r]
  \b_3 &   \b_4 &   \b_2 &   \b_1 \\
- \a_3 & - \a_4 & - \a_2 & - \a_1 
\end{pmatrix*}\\[2ex]
\label{params_s0}
\mathbf{s}_0 &\colon \qquad
\begin{pmatrix*}[r]
  \a_1 &   \a_2 &   \a_4 &   \a_3 \\
- \b_1 & - \b_2 & - \b_4 & - \b_3 
\end{pmatrix*} \\[2ex]
\label{params_s1}
\mathbf{s}_1 &\colon \qquad
\begin{pmatrix*}[r]
\b_3 & \b_4 & \b_1 & \b_2 \\
\a_3 & \a_4 & \a_1 & \a_2 
\end{pmatrix*} \\[2ex]
\label{params_s2}
\mathbf{s}_2 &\colon \qquad
\begin{pmatrix*}[r]
- \a_2 & - \a_1 & - \a_3 & - \a_4 \\
  \b_2 &   \b_1 &   \b_3 &   \b_4 
\end{pmatrix*}\\[2ex]
\label{params_s3}
\mathbf{s}_3 &\colon \qquad
\begin{pmatrix*}[r]
- \b_4 & - \b_3 & - \b_2 & - \b_1 \\
- \a_4 & - \a_3 & - \a_2 & - \a_1 
\end{pmatrix*}
\end{align}

Note that the symmetry operations $\mathbf{r}_0, \mathbf{r}_2, \mathbf{s}_0, \mathbf{s}_2$ keep the roles of $\a_i$ and $\b_i$ (as coefficients of $u$ and $v$, respectively),
whereas the other four operations interchange them.
Only the subgroup consisting of $\mathbf{r}_0, \mathbf{s}_1$ keeps the signs of both $\a_i$ and $\b_i$.

Finally, the time reversal $t \mapsto -t$ transforms \eqref{ode:exp} into
\begin{equation}\label{params_tr}
\begin{pmatrix}
\a_2 & \a_1 & \a_4 & \a_3 \\
\b_2 &  \b_1 & \b_4 &  \b_3 
\end{pmatrix} .
\end{equation}


\section{Global stability} \label{sec:stability}

Ultimately, we are interested in stability properties of the unique positive equilibrium of the ODE~\eqref{ode:Ssys}.
Let $G=(g_{ij}) \in \R^{2 \times 2}$ and $H=(h_{ij}) \in \R^{2 \times 2}$.
A short calculation shows that the condition
\begin{equation*} 
\det
(G-H)
\neq 0
\end{equation*}
ensures that the ODE~\eqref{ode:Ssys} admits a unique positive equilibrium for all given values of the positive parameters $\al_1$, $\al_2$, $\be_1$, $\be_2$.
Then $(1,1)$ is the unique positive equilibrium of the ODE~\eqref{ode:Ssys_scaled},
and $(0,0)$ is the unique equilibrium of the ODE~\eqref{ode:exp} with \eqref{params_ab_gh}.
On the other hand, if $\det(G-H)=0$, then the ODE~\eqref{ode:Ssys} admits either no equilibrium or infinitely many equilibria,
depending on the specific values of $\al_1$, $\al_2$, $\be_1$, $\be_2$.

We call an equilibrium (in $\R^2_+$) of the ODEs~\eqref{ode:Ssys} or \eqref{ode:Ssys_scaled}
or an equilibrium (in $\R^2$) of the ODE~\eqref{ode:exp} with \eqref{params_ab_gh} \emph{globally asymptotically stable}
if it is Lyapunov stable and from each initial condition the solution converges to the equilibrium.

Below, we characterize the parameters $G$ and $H$ (the real exponents) 
for which the resulting ODEs admit a unique equilibrium that is (globally) asymptotically stable for {\em all other} parameters (the positive coefficients).
To begin with, 
we state the obvious relation between the stability properties of the ODEs~\eqref{ode:Ssys}, \eqref{ode:Ssys_scaled}, and \eqref{ode:exp} with \eqref{params_ab_gh}.

\begin{prop}
Fix $G,H \in \R^{2 \times 2}$ with $\det(G-H)\neq0$. Then the following are equivalent:
\begin{enumerate}[{\rm(i)}]
\item The unique positive equilibrium $(x_1^*,x_2^*)$ of the ODE~\eqref{ode:Ssys} is (globally) asymptotically stable for all $\al_1, \al_2, \be_1, \be_2 > 0$.
\item The unique positive equilibrium $(1,1)$ of the ODE~\eqref{ode:Ssys_scaled} is (globally) asymptotically stable for all $\gamma_1, \gamma_2 > 0$.
\item The unique equilibrium $(0,0)$ of the ODE~\eqref{ode:exp} with \eqref{params_ab_gh} is (globally) asymptotically stable for all $\gamma_1, \gamma_2 > 0$.
\end{enumerate}
\end{prop}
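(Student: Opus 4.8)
The three ODEs are related by explicit changes of variables, and since each such change of variables is a diffeomorphism carrying the phase space of one system onto the phase space of the other and the distinguished equilibrium to the distinguished equilibrium, my plan is simply to verify that each change of variables \emph{conjugates} the corresponding flows, and then to invoke the invariance of (global) asymptotic stability under diffeomorphic conjugacy. Concretely, if $\phi$ is a diffeomorphism with $\phi(p)=q$ mapping orbits of one flow onto orbits of the other with the same time parametrization, then a neighborhood basis at $p$ maps to one at $q$, orbits converging to $p$ map to orbits converging to $q$, and $\phi$ being onto means that ``from each initial condition'' translates faithfully; hence $p$ is (globally) asymptotically stable for the first flow iff $q$ is for the second. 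This reduces everything to checking two conjugacies and one quantifier-matching point.

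For (ii) $\Leftrightarrow$ (iii), fix $\gamma_1,\gamma_2>0$ and consider $\phi(x_1,x_2)=\bigl(\tfrac{1}{\gamma_1}\ln x_1,\tfrac{1}{\gamma_2}\ln x_2\bigr)$, a diffeomorphism from $\R_+^2$ onto $\R^2$ sending $(1,1)$ to $(0,0)$; this is the transformation of Subsection~\ref{subsec:exponential}. I would compute $\dot u=\tfrac{1}{\gamma_1}\dot x_1/x_1$ and $\dot v=\tfrac{1}{\gamma_2}\dot x_2/x_2$ along solutions of~\eqref{ode:Ssys_scaled} and substitute $x_1=\e^{\gamma_1 u}$, $x_2=\e^{\gamma_2 v}$ to recover precisely~\eqref{ode:exp} with the parameters~\eqref{params_ab_gh}, noting that no time rescaling is introduced. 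Since the same $\gamma_1,\gamma_2$ occur on both sides, the universal quantifiers match trivially, and the equivalence follows from the invariance principle above.

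For (i) $\Leftrightarrow$ (ii), I would use the linear scaling $\psi(x_1,x_2)=(x_1/x_1^*,x_2/x_2^*)$, a diffeomorphism of $\R_+^2$ onto itself sending $(x_1^*,x_2^*)$ to $(1,1)$. Substituting $x_i=x_i^* y_i$ into~\eqref{ode:Ssys} and using the equilibrium identities $\al_i(x_1^*)^{g_{i1}}(x_2^*)^{g_{i2}}=\be_i(x_1^*)^{h_{i1}}(x_2^*)^{h_{i2}}$ turns the $i$-th right-hand side into $\gamma_i\bigl(y_1^{g_{i1}}y_2^{g_{i2}}-y_1^{h_{i1}}y_2^{h_{i2}}\bigr)$ with the $\gamma_i$ of~\eqref{ode:Ssys_scaled}; again this is a genuine conjugacy with no time rescaling. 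The point requiring care is that the quantifier ``for all $\al_1,\al_2,\be_1,\be_2>0$'' in (i) and ``for all $\gamma_1,\gamma_2>0$'' in (ii) range over matching sets. For (ii)$\Rightarrow$(i) nothing extra is needed, since each admissible $(\al,\be)$ produces some $(\gamma_1,\gamma_2)\in\R_+^2$; for (i)$\Rightarrow$(ii) I would observe that $(\al,\be)\mapsto(\gamma_1,\gamma_2)$ is surjective onto $\R_+^2$ — for prescribed $(\gamma_1,\gamma_2)$ one may take $x_1^*=x_2^*=1$ and $\al_i=\be_i=\gamma_i$, which makes $(1,1)$ the equilibrium and yields the desired coefficients.

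The two conjugacy computations are routine; the only genuine obstacle is the bookkeeping in the last paragraph, namely confirming that the three ``for all coefficients'' statements really do quantify over corresponding parameter families, which is why the surjectivity of $(\al,\be)\mapsto(\gamma_1,\gamma_2)$ should be checked explicitly rather than assumed. Everything else is a direct consequence of the invariance of (global) asymptotic stability under diffeomorphic conjugacy.
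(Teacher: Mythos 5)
Your proof is correct and takes essentially the approach the paper intends: the paper states this proposition without proof, treating it as an immediate consequence of the two changes of variables already introduced in Section~\ref{sec:ss} (the scaling $x_i = x_i^* y_i$ and the exponential substitution $x_1=\e^{\gamma_1 u}$, $x_2=\e^{\gamma_2 v}$), which are exactly the conjugacies you verify, together with the invariance of (global) asymptotic stability under diffeomorphic conjugacy. The one point the paper leaves implicit --- that the quantifiers match, i.e.\ that $(\alpha,\beta)\mapsto(\gamma_1,\gamma_2)$ is onto $\R_+^2$, which you settle by taking $\alpha_i=\beta_i=\gamma_i$ so that $(1,1)$ is the (unique, by $\det(G-H)\neq0$) equilibrium --- is handled correctly in your write-up.
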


In our main results,
we consider the ODE~\eqref{ode:exp} with \eqref{params_ab_gh}
and write the Jacobian matrix~\eqref{eq:J} with \eqref{params_ab_gh} as
\[
J = (G-H)
\begin{pmatrix}
\gamma_1 & 0 \\ 0 & \gamma_2
\end{pmatrix} .
\]
Clearly, $\sign J=\sign(G-H)$ and $\sign(\det J)=\sign(\det(G-H))$.
In particular, $\det J\neq0$ if and only if $\det(G-H)\neq0$.
First, we characterize asymptotic stability.

\begin{prop} \label{prop:loc_asy_stab}
Fix $G,H \in \R^{2 \times 2}$ with $\det(G-H)\neq0$
and let $J$ be the Jacobian matrix of the ODE~\eqref{ode:exp} with \eqref{params_ab_gh} at the origin.
Then the following are equivalent:
\begin{enumerate}[{\rm(i)}]
\item
The unique equilibrium $(0,0)$ of the ODE~\eqref{ode:exp} with \eqref{params_ab_gh} is asymptotically stable for all $\gamma_1, \gamma_2 > 0$.
\item
$\det J > 0$ and $\sign J$ equals one of the sign matrices
\begin{align*}
\begin{pmatrix} - & * \\ * & - \end{pmatrix},
\begin{pmatrix} 0 & + \\ - & - \end{pmatrix},
\begin{pmatrix} 0 & - \\ + & - \end{pmatrix},
\begin{pmatrix} - & - \\ + & 0 \end{pmatrix}, 
\begin{pmatrix} - & + \\ - & 0 \end{pmatrix}.
\end{align*}
In particular, these conditions are independent of $\gamma_1, \gamma_2 > 0$.
\end{enumerate}
\end{prop}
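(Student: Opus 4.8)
The plan is to reduce the question to the spectrum of the Jacobian $J$ and to exploit the factorization $J=(G-H)\,\mathrm{diag}(\gamma_1,\gamma_2)$ recorded just above the statement. Writing $M=G-H=(m_{ij})$, this factorization yields
\[
\tr J=m_{11}\gamma_1+m_{22}\gamma_2,\qquad \det J=\gamma_1\gamma_2\det M,
\]
so both trace and determinant of $J$ depend on $\gamma_1,\gamma_2>0$ in a completely transparent way. Whenever $\det J\neq0$ and $\tr J\neq0$, the origin is a hyperbolic equilibrium of the ODE~\eqref{ode:exp}, and in that case asymptotic stability is equivalent, by the principle of linearized stability, to $J$ being Hurwitz, i.e.\ to $\tr J<0$ together with $\det J>0$. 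The whole proof then amounts to tracking when these two inequalities hold for \emph{every} $\gamma_1,\gamma_2>0$, plus a separate treatment of the non-hyperbolic borderline.

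For the implication (ii)$\Rightarrow$(i) I would simply verify that each of the five listed sign patterns, combined with $\det J>0$, forces $\tr J<0$ throughout the open positive quadrant: in the first pattern both diagonal entries of $M$ are negative, while in the remaining four exactly one diagonal entry vanishes and the other is negative, so $m_{11}\gamma_1+m_{22}\gamma_2<0$ for all positive $\gamma$. Hence $J$ is Hurwitz and the origin is an asymptotically stable hyperbolic sink for every $\gamma$.

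For (i)$\Rightarrow$(ii) I would argue by elimination. Since $\det(G-H)\neq0$ excludes $\det M=0$, and $\det M<0$ would give $\det J<0$ and a saddle for all $\gamma$, asymptotic stability forces $\det M>0$, that is $\det J>0$. Next, if $m_{11}>0$ then choosing $\gamma_1$ large makes $\tr J>0$, producing an unstable node or focus; the same argument with $\gamma_2$ handles $m_{22}$. Thus $m_{11}\le0$ and $m_{22}\le0$. Together with $\det M=m_{11}m_{22}-m_{12}m_{21}>0$ this pins down the sign of the off-diagonal product in each case, and a short enumeration (optionally shortened using the symmetry operations of Subsection~\ref{subsec:sym}, several of which interchange the roles of the $a_i$ and $b_i$ and thereby map the four single-zero patterns onto one another) reproduces exactly the five sign matrices.

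The main obstacle is the remaining degenerate case $m_{11}=m_{22}=0$, where $\tr J\equiv0$, the linearization only yields a center, and stability cannot be read off $J$ at all. Here $a_1=a_2$ and $b_3=b_4$, which factors the vector field as $\dot u=\e^{a_1u}\bigl(\e^{b_1v}-\e^{b_2v}\bigr)$ and $\dot v=\e^{b_3v}\bigl(\e^{a_3u}-\e^{a_4u}\bigr)$. Separating variables exhibits the first integral
\[
H(u,v)=\int_0^u \e^{-a_1 s}\bigl(\e^{a_3 s}-\e^{a_4 s}\bigr)\,\mathrm{d}s-\int_0^v \e^{-b_3 s}\bigl(\e^{b_1 s}-\e^{b_2 s}\bigr)\,\mathrm{d}s,
\]
which one checks satisfies $\dot H=0$ along solutions. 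Its gradient vanishes at the origin, and its Hessian there is $\mathrm{diag}(a_3-a_4,\,b_2-b_1)$; since $\det M>0$ forces the off-diagonal entries of $M$ to have opposite signs, this Hessian is definite, so the origin is a nondegenerate extremum of $H$ and is therefore surrounded by closed level curves. Hence it is a center, never asymptotically stable, which excludes $m_{11}=m_{22}=0$ and completes the equivalence.
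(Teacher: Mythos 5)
Your proposal is correct and follows essentially the same route as the paper: linearized stability (Lyapunov's theorem) in both directions, reduction of the trace condition to non-positivity of the diagonal entries of $G-H$ for all $\gamma_1,\gamma_2>0$, and exclusion of the doubly degenerate case $a_1=a_2$, $b_3=b_4$ by showing it is a center. The only cosmetic difference is that you verify the center directly via the separable first integral and the definiteness of its Hessian $\mathrm{diag}(a_3-a_4,\,b_2-b_1)$, whereas the paper simply cites case~S with the integrating factor $\e^{-a_1u-b_4v}$ — the same integrable structure.
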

\begin{proof}
Statement (ii) implies $\det J > 0$ and $\tr J < 0$, and statement (i) follows by a theorem of Lyapunov.
By the same theorem (together with the assumption $\det J \neq 0$), 
statement (i) implies $\det J > 0$ and $\tr J \leq 0$ for all $\gamma_1$, $\gamma_2 > 0$.
The trace condition is equivalent to both diagonal entries of $J$ being non-positive.
However, both diagonal entries being zero makes the origin a center, as can be seen using the integrating factor $\e^{-a_1u-b_4v}$, cf.\ case S in Subsection~\ref{subsec:first}. 
The signs of the off-diagonal entries follow from $\det J>0$.
\end{proof}

In our main result, we characterize global stability.

\begin{thm} \label{thm:glob_asy_stab}
Fix $G,H \in \R^{2 \times 2}$ with $\det(G-H)\neq0$
and let $J$ be the Jacobian matrix of the ODE~\eqref{ode:exp} with \eqref{params_ab_gh} at the origin.
Then the following are equivalent:
\begin{enumerate}[{\rm(i)}]
\item The unique equilibrium $(0,0)$ of the ODE~\eqref{ode:exp} with \eqref{params_ab_gh} is globally asymptotically stable for all $\gamma_1, \gamma_2 > 0$.
\item $\det J > 0$ and either
\begin{enumerate}[\rm(a)]
\item $\sign J = \begin{pmatrix} - & * \\ * & - \end{pmatrix}$,
\item $\sign J = \begin{pmatrix} 0 & + \\ - & - \end{pmatrix}$ and $\a_3 \leq \a_1 = \a_2 \leq \a_4$,
\item $\sign J = \begin{pmatrix} 0 & - \\ + & - \end{pmatrix}$ and $\a_4 \leq \a_1 = \a_2 \leq \a_3$,
\item $\sign J = \begin{pmatrix} - & - \\ + & 0 \end{pmatrix}$ and $\b_1 \leq \b_3 = \b_4 \leq \b_2$, or
\item $\sign J = \begin{pmatrix} - & + \\ - & 0 \end{pmatrix}$ and $\b_2 \leq \b_3 = b_4 \leq b_1$.
\end{enumerate}
In particular, these conditions are independent of $\gamma_1, \gamma_2 > 0$.
\end{enumerate}
\end{thm}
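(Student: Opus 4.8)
The plan is to combine Proposition~\ref{prop:loc_asy_stab} with a Bendixson--Dulac argument (to exclude periodic orbits) and a boundedness analysis at infinity (to apply the Poincaré--Bendixson theorem). Throughout I would exploit the $D_4$-symmetry of Subsection~\ref{subsec:sym}: each operation $\mathbf{r}_i,\mathbf{s}_i$ is a linear change of coordinates (possibly orientation-reversing) that maps the family~\eqref{ode:exp} into itself and preserves global asymptotic stability, so it suffices to treat one representative per symmetry class. Case~(a) is its own class, while $\mathbf{s}_1$ (scheme~\eqref{params_s1}) interchanges $\a_i\leftrightarrow\b_i$ and thereby maps the degenerate case~(b) to case~(d), carrying the inequality $\a_3\le\a_1=\a_2\le\a_4$ exactly to $\b_1\le\b_3=\b_4\le\b_2$; the remaining reflections link (b),(c),(d),(e) into a single orbit. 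Hence I would only prove the theorem in cases~(a) and~(b).

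Necessity is immediate from Proposition~\ref{prop:loc_asy_stab}: global asymptotic stability implies asymptotic stability, hence $\det J>0$ and one of the five sign patterns. What remains is to show that in the degenerate patterns the displayed inequality is forced, which I would obtain by exhibiting a forward orbit that escapes to infinity whenever the inequality fails, contradicting global attraction; this uses the same asymptotic estimate described below, read in the escaping direction. For the exclusion of periodic orbits I would use the Dulac function $B=\e^{-\a_1 u-\b_4 v}$ already hinted at in the proof of Proposition~\ref{prop:loc_asy_stab}. Writing $f$ for the vector field of~\eqref{ode:exp}, the two terms whose exponent coefficient is $\a_1-\a_1=0$ and $\b_4-\b_4=0$ drop out, and a direct computation gives
\begin{align*}
\dive(Bf)=(\a_1-\a_2)\,\e^{(\a_2-\a_1)u+(\b_2-\b_4)v}+(\b_3-\b_4)\,\e^{(\a_3-\a_1)u+(\b_3-\b_4)v}.
\end{align*}
In case~(a) both $\a_1-\a_2<0$ and $\b_3-\b_4<0$; in case~(b) the first coefficient vanishes while $\b_3-\b_4<0$. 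In either situation $\dive(Bf)<0$ on all of $\R^2$, so Bendixson--Dulac excludes periodic orbits and polycycles on the simply connected plane.

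The crux is boundedness of forward orbits: once this is established, the origin being the unique equilibrium and there being no periodic orbits lets Poincaré--Bendixson force every forward orbit to converge to the origin, and together with the Lyapunov stability supplied by Proposition~\ref{prop:loc_asy_stab} this yields global asymptotic stability. In case~(b), where $\a_1=\a_2=:\a$ makes the first equation factor as $\dot u=\e^{\a u}(\e^{\b_1 v}-\e^{\b_2 v})$ (so the $u$-nullcline is the axis $v=0$), I would analyze orbits near this axis as $u\to\pm\infty$. For small $v$ and large $u$ the $v$-equation is dominated by $\dot v\approx-\e^{\a_4 u}$ for $u\to+\infty$ and $\dot v\approx\e^{\a_3 u}$ for $u\to-\infty$; estimating $\dd{v}{u}=\dot v/\dot u$ to leading order gives $v\,\dd{v}{u}\approx\mp\,\e^{(\a_4-\a)u}/(\b_1-\b_2)$ respectively $\e^{(\a_3-\a)u}/(\b_1-\b_2)$. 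Integrating, $v^2$ is driven negative — an impossibility that forces the orbit to turn around — precisely when $\a\le\a_4$ (resp.\ $\a_3\le\a$), whereas if either inequality is violated the relevant integral converges, $v^2$ tends to a finite positive limit, and the orbit runs off to $u=\pm\infty$. To make this rigorous I would replace the heuristic by genuine invariant/trapping regions bounded by level curves of the form $v^2=c\,\e^{(\a_4-\a)u}$ and $v^2=c\,\e^{(\a_3-\a)u}$. Case~(a), where the two nullclines are distinct lines through the origin and both diagonal entries of $J$ are strictly negative, is handled by the analogous but \emph{unconditional} turning argument.

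The main obstacle is exactly this control at infinity. Because the right-hand sides are differences of exponentials with competing growth rates, a naive radial Lyapunov estimate fails: in many directions the field points strongly outward before the orbit is turned back, so one cannot hope for radial monotonicity and must instead construct explicit nested trapping regions (or, equivalently, a global Lyapunov function) whose existence is tied to the stated inequalities. Pinning down these regions so that they simultaneously certify boundedness when the inequalities hold and betray escape when they fail is the delicate, genuinely two-dimensional part of the argument; everything else (symmetry reduction, the Dulac computation, and the final Poincaré--Bendixson step) is comparatively routine.
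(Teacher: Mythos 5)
Your overall architecture coincides with the paper's: Proposition~\ref{prop:loc_asy_stab} for the local part and the necessity of the five sign patterns; a Dulac function to exclude periodic orbits; symmetry to reduce (b)--(e) to (b); and Poincar\'e--Bendixson once boundedness is known. Your Dulac function $\e^{-a_1u-b_4v}$ is exactly the integrating factor of Lemma~\ref{lem:divergence} with $a=a_1$, $b=b_4$, and your divergence computation is correct; your symmetry bookkeeping (in particular that $\mathbf{s}_1$ carries (b) with $a_3\le a_1=a_2\le a_4$ to (d) with $b_1\le b_3=b_4\le b_2$) matches the paper's reduction. Your necessity argument --- an escaping orbit when $a_1>a_4$ --- is also in the same spirit as the paper's: a forward-invariant wedge $\{u\ge u_0,\ \gamma u\le v\le\gamma u_0\}$ in the subcase $a_1>a_2$, and, when $a_1=a_2$, the explicit separatrix \eqref{eq:solution_curve_exp_without_monom3} of the auxiliary system \eqref{ode:exp_without_monom3}, above which all solutions are unbounded.

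The genuine gap is the sufficiency half of the boundedness claim, which you flag but do not close, and which is precisely the content of Lemma~\ref{lem:bounded_solutions}(a),(b2). Your near-axis estimate $v\,\frac{dv}{du}\approx-\e^{(a_4-a_1)u}/(b_1-b_2)$ shows at best that an orbit hugging the positive $u$-axis must cross $v=0$ at finite $u$; it does not bound the orbit. Between consecutive axis crossings the solution makes a full excursion through the plane, and a priori the amplitude of successive revolutions could grow without bound --- indeed, since the right-hand side grows exponentially, even forward completeness (no finite-time blow-up) is not automatic and must come from trapping in compact sets. A global mechanism is needed: the paper uses a Lyapunov function $V$ with $(\partial_1 V)(u,v)=-\e^{-a_1u}(\e^{a_3u}-\e^{a_4u})$ and $(\partial_2 V)(u,v)=\e^{-b_4v}(\e^{b_1v}-\e^{b_2v})$, whose sublevel sets are bounded iff $a_3\le a_1\le a_4$ \emph{and} $b_2\le b_4\le b_1$, and when the $b$-chain fails (which the sign pattern (b) does not rule out) it repairs this by intersecting sublevel sets with half-planes $\{v\le d\}$ or $\{v\ge d\}$ to obtain bounded forward-invariant sets. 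Note that your axis-local analysis cannot see this $b_4$-dependence at all, since $\e^{b_iv}\approx 1+b_iv$ near $v=0$. Alternatively, your turning argument could be completed by the step the paper uses in case (a): negative divergence of the scaled field forbids \emph{outward} spiraling, so once one revolution closes up, the Jordan curve it bounds (together with the transversal segment) traps the orbit forever. As written, your treatment of case (a) by an ``unconditional turning argument'' has the same omission: in two of the nine nullcline configurations orbits genuinely spiral, and there turning alone does not yield boundedness --- the paper handles the remaining seven configurations instead via forward-invariant regions cut out by the nullclines and ultimate monotonicity in both coordinates.
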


The proof of Theorem~\ref{thm:glob_asy_stab} requires two auxiliary results, Lemmas~\ref{lem:divergence} and~\ref{lem:bounded_solutions}.
There we study the ODE~\eqref{ode:exp} without the substitutions \eqref{params_ab_gh}.
Recall that its Jacobian matrix is given by~\eqref{eq:J},
that is,
\begin{align*}
J =
\begin{pmatrix}
\a_1-\a_2 & \b_1-\b_2 \\ \a_3-\a_4 & \b_3-\b_4
\end{pmatrix} .
\end{align*}
Lemma~\ref{lem:divergence} on the non-existence of periodic solutions 
will also be useful in Subsection~\ref{subsec:first}, where we look for first integrals of the ODE~\eqref{ode:exp}.
Lemma~\ref{lem:bounded_solutions} on the boundedness of solutions 
will also be useful in Subsection~\ref{subsec:global_center}, where we solve the global center problem. 

\begin{lem} \label{lem:divergence}
Let $a_1 \leq a_2$, $b_3 \leq b_4$ with $(a_1-a_2,b_3-b_4) \neq (0,0)$.
Further, let $a_1 \leq a  \leq a_2$ and $b_3 \leq b \leq b_4$. Then,
\begin{enumerate}
\item[\rm{(a)}] the r.h.s.\ of the ODE~\eqref{ode:exp} multiplied by $\e^{-au-bv}$ has negative divergence, 
\item[\rm{(b)}] there is no periodic solution of the ODE~\eqref{ode:exp}.
\end{enumerate}
\end{lem}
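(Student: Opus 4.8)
The plan is to prove (a) by directly computing the divergence, and then to obtain (b) from (a) via Dulac's criterion.

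For (a), I would write $\tilde F = (\tilde F_1, \tilde F_2)$ for the right-hand side of~\eqref{ode:exp} multiplied by $\e^{-au-bv}$, so that
\[
\tilde F_1 = \e^{(\a_1-a)u+(\b_1-b)v} - \e^{(\a_2-a)u+(\b_2-b)v}, \qquad
\tilde F_2 = \e^{(\a_3-a)u+(\b_3-b)v} - \e^{(\a_4-a)u+(\b_4-b)v}.
\]
Differentiating, $\dive \tilde F = \partial_u \tilde F_1 + \partial_v \tilde F_2$ becomes a sum of four exponential terms whose respective coefficients are $\a_1-a$, $-(\a_2-a)$, $\b_3-b$, and $-(\b_4-b)$, each multiplying a strictly positive exponential. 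The hypotheses $\a_1 \le a \le \a_2$ and $\b_3 \le b \le \b_4$ force all four coefficients to be non-positive, so $\dive \tilde F \le 0$ on all of $\R^2$.

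The one point that needs care is upgrading this to strict negativity everywhere. The non-degeneracy hypothesis $(\a_1-\a_2,\b_3-\b_4)\neq(0,0)$ means $\a_1<\a_2$ or $\b_3<\b_4$. If $\a_1<\a_2$, then since $\a_1 \le a \le \a_2$ at least one of $\a_1<a$ and $a<\a_2$ holds, so at least one of the first two coefficients is strictly negative; as its exponential never vanishes, that term is strictly negative at every point, while the remaining three terms are $\le 0$. The case $\b_3<\b_4$ is handled identically using the last two terms. Hence $\dive \tilde F<0$ throughout $\R^2$, which is~(a).

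For (b), I would use $B=\e^{-au-bv}$ as a Dulac function for~\eqref{ode:exp}: it is positive and smooth on the simply connected domain $\R^2$, and the divergence of $B$ times the right-hand side of~\eqref{ode:exp} equals $\dive \tilde F$, which is strictly negative by~(a). Dulac's criterion then excludes any periodic orbit, giving~(b). The argument is elementary throughout; the only real obstacle is the sign bookkeeping in~(a) that converts the non-degeneracy condition into pointwise strict negativity of the divergence.
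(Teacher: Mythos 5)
Your proof is correct and takes essentially the same route as the paper's: multiply the vector field by $\e^{-au-bv}$, observe that the divergence is a sum of four exponential terms whose coefficients are non-positive under the hypotheses and at least one strictly negative by the non-degeneracy condition, and conclude (b) from (a) via the Bendixson--Dulac test on $\R^2$. Your careful upgrade from $\leq 0$ to $<0$ merely spells out a step the paper leaves implicit.
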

\begin{proof}
Let $f$ denote the r.h.s.\ of the ODE~\eqref{ode:exp}.
Multiplying $f(u,v)$ by $h(u,v)=\e^{-\a u-\b v}$ yields a vector field with negative divergence, since
\begin{align*}
\frac{\dive(h f)}{h}(u,v)=& \; (\a_1 - \a) \e^{\a_1 u + \b_1 v} + (\a-\a_2) \e^{\a_2 u + \b_2 v}\\ 
&+ (\b_3 - \b) \e^{\a_3 u + \b_3 v} + (\b-\b_4) \e^{\a_4 u + \b_4 v} .
\end{align*}
By the Bendixson-Dulac test, (a) implies (b).
\end{proof}

\begin{lem} \label{lem:bounded_solutions}
Let $J$ be the Jacobian matrix of the ODE~\eqref{ode:exp} at the origin 
with $\det J > 0$.
The following statements provide conditions for the boundedness of all solutions of the ODE~\eqref{ode:exp} in positive time.
\begin{enumerate}
\item[\rm(a)] If $\sign J = \begin{pmatrix} - & * \\ * & -  \end{pmatrix}$, then boundedness holds.
\item[\rm(b1)] If $\sign J = \begin{pmatrix} + & + \\ - & -  \end{pmatrix}$, then boundedness implies $a_3 \leq a_2 < a_1 \leq a_4$.
\item[\rm(b2)] If $\sign J = \begin{pmatrix} 0 & + \\ - & -  \end{pmatrix}$, then boundedness is equivalent to $a_3 \leq a_2 = a_1 \leq a_4$.
\item[\rm(c1)] If $\sign J = \begin{pmatrix} + & - \\ + & -  \end{pmatrix}$, then boundedness implies $a_4 \leq a_2 < a_1 \leq a_3$.
\item[\rm(c2)] If $\sign J = \begin{pmatrix} 0 & - \\ + & -  \end{pmatrix}$,  then boundedness is equivalent to $a_4 \leq a_2 = a_1 \leq a_3$.
\item[\rm(d1)] If $\sign J = \begin{pmatrix} -  & - \\ + & + \end{pmatrix}$, then boundedness implies $b_1 \leq b_4 < b_3 \leq b_2$.
\item[\rm(d2)] If $\sign J = \begin{pmatrix} -  & - \\ + & 0 \end{pmatrix}$, then boundedness is equivalent to $b_1 \leq b_4 = b_3 \leq b_2$.
\item[\rm(e1)] If $\sign J = \begin{pmatrix} -  & + \\ - & + \end{pmatrix}$, then boundedness implies $b_2 \leq b_4 < b_3 \leq b_1$.
\item[\rm(e2)] If $\sign J = \begin{pmatrix} -  & + \\ - & 0 \end{pmatrix}$, then boundedness is equivalent to $b_2 \leq b_4 = b_3 \leq b_1$.
\end{enumerate}
\end{lem}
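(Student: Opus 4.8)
The plan is to collapse the nine cases to three using the dihedral symmetries of Subsection~\ref{subsec:sym}. Each operation $\mathbf{r}_i,\mathbf{s}_j$ is a linear isomorphism of the phase plane, hence preserves boundedness of solutions in positive time, while permuting the eight parameters as recorded in \eqref{params_r1}--\eqref{params_s3}; correspondingly it maps the Jacobian and the asserted exponent inequalities into one another. Since $\mathbf{s}_1$ acts on $J$ by reversing both indices (sending the sign pattern of (b1) to that of (d1) and (c1) to (e1)) and $\mathbf{r}_1$ acts by $J\mapsto\left(\begin{smallmatrix}J_{22}&-J_{21}\\-J_{12}&J_{11}\end{smallmatrix}\right)$ (sending (b1) to (e1) and (c1) to (d1)), the four patterns (b1),(c1),(d1),(e1) form a single orbit with representative (b1); likewise (b2),(c2),(d2),(e2) reduce to (b2), and pattern (a) is fixed. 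Thus it suffices to treat (a), (b1), (b2). The common engine is the observation that $\sign\dot u=\sign\bigl((a_1-a_2)u+(b_1-b_2)v\bigr)$ and $\sign\dot v=\sign\bigl((a_3-a_4)u+(b_3-b_4)v\bigr)$, so the two nullclines are lines through the origin and the signs of $\dot u,\dot v$ are constant on each of the (generically four) open sectors they cut out. A solution can only escape inside a sector; writing $\ell_i=a_iu+b_iv$, the vector field there reduces to its dominant monomials and $\dd{v}{u}=\dot v/\dot u$ becomes asymptotically a single exponential whose exponent is a difference of two of the $\ell_i$.

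For the necessity direction in (b1) and (b2) I would argue by contraposition, exhibiting escaping trajectories when an inequality fails. In the sector where $\dot u>0$ and $\dot v<0$ (the far first quadrant in (b1)), the dominant balance gives $\dd{v}{u}\approx-\e^{(a_4-a_1)u+(b_4-b_1)v}$; if $a_1>a_4$ the exponent tends to $-\infty$ as $u\to+\infty$ at bounded $v$, so $v$ converges while $u\to+\infty$, trapping the orbit in the sector and sending it to infinity. The condition $a_1\le a_4$ is exactly what makes $\dd{v}{u}\to-\infty$ instead, turning the orbit back across the $v$-nullcline. A symmetric comparison in the opposite sector ($u\to-\infty$), now of $\ell_3$ against $\ell_2$ with $\dd{v}{u}\approx-\e^{(a_3-a_2)u+(b_3-b_2)v}$, yields the inequality $a_3\le a_2$. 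Making this rigorous amounts to producing an explicit forward-invariant sub-region bounded by a segment $v=\text{const}$ and a level line $\ell_i=\text{const}$, which is the routine part.

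For the sufficiency needed in the equivalences (so (b2), and by symmetry (c2),(d2),(e2)) and for the unconditional statement (a), I would construct a compact trapping region. In (b2) the equality $a_1=a_2$ simplifies the first equation to $\dot u=\e^{a_1u}(\e^{b_1v}-\e^{b_2v})$, so $\sign\dot u=\sign v$: above the $u$-axis $u$ increases, but $a_1\le a_4$ forces, by the same $\dd{v}{u}$ comparison, the orbit to cross the $v$-nullcline and re-enter $\{v\le0\}$ after a bounded excursion in $u$; symmetrically $a_3\le a_1$ controls the excursion below the axis, and patching the four excursions yields the trapping region. In case (a), both diagonal entries of $J$ are negative, so $a_1<a_2$ and $b_3<b_4$ make the dominant monomial point inward in every escape sector irrespective of the off-diagonal signs, the hypothesis $\det J>0$ excluding the remaining (diagonal) configurations; a trapping region therefore exists for all parameter values, and here Lemma~\ref{lem:divergence} applies as well, since its hypotheses $a_1\le a_2$, $b_3\le b_4$ hold.

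The main obstacle is precisely the two-way coupling between the two equations: neither coordinate can be bounded in isolation, because on part of every candidate boundary the ``wrong'' monomial pushes outward, and one must show that the companion equation simultaneously turns the orbit back. The device resolving this throughout is the asymptotic reduction of $\dd{v}{u}$ to a single exponential together with the sign of its exponent, which is exactly where the chain $a_3\le a_2\le a_1\le a_4$ (and its symmetric images) enters. A secondary, purely bookkeeping obstacle is the treatment of the boundary equalities in the stated conditions and the distinction between the strict ``1'' variants (only necessity) and the degenerate ``2'' variants with a vanishing diagonal entry (full equivalence): it is the collapse of $\sign\dot u$ (or $\sign\dot v$) to the sign of a single coordinate in the degenerate case that upgrades necessity to sufficiency.
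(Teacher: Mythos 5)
Your overall architecture coincides with the paper's: reduce the nine sign patterns to (a), (b1), (b2) via the dihedral symmetries (your bookkeeping for $\mathbf{r}_1$, $\mathbf{s}_1$ is correct and matches the paper's reduction), treat (a) by nullcline sectors plus the negative divergence of Lemma~\ref{lem:divergence} to control the two spiraling configurations, and prove necessity in (b1) by a forward-invariant escape wedge when $a_1>a_4$ (the paper's wedge $\{u\geq u_0,\ \gamma u\leq v\leq\gamma u_0\}$ with $\gamma<0$ small is exactly the region you describe; the paper gets $a_3\leq a_2$ from $a_1\leq a_4$ by applying $\mathbf{r}_2$ rather than by a second sector analysis, which is immaterial). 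However, both halves of your treatment of the degenerate case (b2) have genuine gaps.

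First, necessity in (b2): your claim that the rigorization is ``routine'' via a region bounded by a segment $v=\text{const}$ and a level line fails precisely when $a_1=a_2$. There $\dot u=\e^{a_1u}\bigl(\e^{b_1v}-\e^{b_2v}\bigr)$, so $\sign\dot u=\sign v$: on $v=0$ one has $\dot u=0$ and $\dot v<0$, and for $v<0$ the orbit moves left, so no straight-sided region in $\{u\geq u_0\}$ is both forward invariant and escaping --- any decreasing lower boundary line meets $v=0$ at finite $u$, near which $|\dot v/\dot u|\sim \e^{(a_4-a_1)u+b_4v}/\bigl((b_1-b_2)v\bigr)$ blows up and the flow exits downward. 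The escaping orbits hug $v\to0^+$ while $u\to+\infty$, and the paper captures them with a genuinely curved barrier: it drops the monomial $\e^{a_3u+b_3v}$, integrates the resulting auxiliary system \eqref{ode:exp_without_monom3} by separation of variables, and uses the explicit orbit \eqref{eq:solution_curve_exp_without_monom3} (which tends to $u=+\infty$, $v=0$) as a lower comparison curve; solutions of the full system starting above it are monotone and unbounded. Your dominant-balance heuristic identifies the right phenomenon (integrability of $\e^{(a_4-a_1)u}$), but the construction you propose to make it rigorous would not go through.

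Second, sufficiency in (b2): ``patching the four excursions'' shows each excursion is finite but does not exclude an orbit circulating with excursions that grow from one revolution to the next, i.e.\ an outward spiral to infinity; a per-excursion bound is not a trapping region. You would need either to invoke the negative divergence of Lemma~\ref{lem:divergence} here as well (you cite it only in case (a)) to forbid outward spiraling, or, as the paper does, to produce a global monotone quantity: an explicit Lyapunov function $V$ with $(\partial_1V)(u,v)=-\e^{-a_1u}(\e^{a_3u}-\e^{a_4u})$ and $(\partial_2V)(u,v)=\e^{-b_4v}(\e^{b_1v}-\e^{b_2v})$, whose derivative along solutions is $\bigl(\e^{b_1v}-\e^{b_2v}\bigr)\e^{a_3u}\bigl(\e^{(b_3-b_4)v}-1\bigr)\leq0$. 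A further point your sketch misses entirely: the sublevel sets of $V$ are bounded only if additionally $b_2\leq b_4\leq b_1$, and when these inequalities fail the paper must intersect the sublevel sets with half-planes $\{v\leq d\}$ or $\{v\geq d\}$ to obtain bounded forward-invariant sets. So the skeleton is the paper's, but case (b2) --- which is the substantive content of the equivalences --- is not actually proved by the proposal as written.
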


\begin{proof}
We start by proving (a). In order to prove the boundedness of the solutions in the case $\a_1 < \a_2$, $\b_3 < \b_4$, and $\det J > 0$, we consider all possible signs of $\a_3-\a_4$ and $\b_1-\b_2$ and the corresponding nullcline geometries. For phase portraits in the nine cases, see Figure~\ref{fig:streamplots_very_stable_region}. In two cases (top left and bottom right),
solutions may spiral around the origin. Since the divergence of a scaled version of the right-hand side of the ODE~\eqref{ode:exp} is negative (see Lemma~\ref{lem:divergence}), they can spiral inwards only (anti-clockwise and clockwise, respectively). In the other seven cases, two of the four regions bounded by the nullclines are forward invariant, hence solutions are ultimately monotonic in both coordinates and converge to the origin.

The symmetry operations (of the square) introduced in Subsection~\ref{subsec:sym}
preserve $\det J > 0$ and the boundedness of solutions.
Hence, statements (c), (d), and (e) follow from (b) by applying the operations $\mathbf{s}_0$ or $\mathbf{s}_2$, $\mathbf{s}_1$ or $\mathbf{s}_3$, and $\mathbf{r}_1$ or $\mathbf{r}_3$, respectively, and it suffices to prove~(b).

\[
\xymatrix{
\text{(b)}
\ar@(l,u)^>(0.6){\textbf{r}_0,\textbf{r}_2}  \ar[rr]^{\textbf{s}_0,\textbf{s}_2} \ar[dd]^{\textbf{r}_1,\textbf{r}_3} \ar[ddrr]^{\textbf{s}_1,\textbf{s}_3} && 
\text{(c)} \\
\\
\text{(e)} && \text{(d)}
}
\]

To prove (b1), first note that $a_3 \leq a_2$ follows from $a_1 \leq a_4$ by applying the operation $\mathbf{r}_2$. Since $a_2 < a_1$ follows from the definition of the sign matrix, it suffices to prove that $a_1 \leq a_4$ is necessary for the boundedness. Assume $a_1 > a_4$ and $a_1 > a_2$.
A short calculation shows that the set
\[
\{(u,v) \in \R^2 ~|~ u\geq u_0 \mbox{ and } \gamma u \leq v \leq \gamma u_0\}
\]
is forward invariant under the ODE~\eqref{ode:exp} if $\gamma < 0$, $|\gamma|$ is small enough, and $u_0 > 0$ is large enough.
All the solutions starting in this forward invariant set are monotonic in both coordinates and unbounded.
For an illustration, see the top panel in Figure~\ref{fig:boundedness_proof}. 

We now show the necessity of $a_3 \leq a_2 = a_1 \leq a_4$ for the boundedness in (b2). The same argument as in the proof of (b1) shows that it suffices to prove that $a_1 \leq a_4$ is necessary for the boundedness. Assume $a_1 > a_4$ and $a_1 = a_2$ and consider the auxiliary ODE
\begin{align} \label{ode:exp_without_monom3}
\dot u &= \e^{\a_1 u + \b_1 v} - \e^{\a_2 u + \b_2 v} , \\
\dot v &=  - \e^{\a_4 u + \b_4 v}, \nonumber
\end{align}
which can be solved by separation of variables.
For $v > 0$,
the curve given by
\begin{align} \label{eq:solution_curve_exp_without_monom3}
\frac{\e^{(b_1-b_4)v}-1}{b_1-b_4} - \frac{\e^{(b_2-b_4)v}-1}{b_2-b_4} = -\frac{\e^{(a_4-a_1)u}}{a_4-a_1}
\end{align}
is an orbit of the ODE~\eqref{ode:exp_without_monom3} with $u \to +\infty$, $v \to 0$ for $t \to \infty$.
All solutions of the ODE~\eqref{ode:exp} that start above this curve are monotonic in both coordinates and unbounded.
For an illustration, see the bottom panel in Figure~\ref{fig:boundedness_proof}.
If $b_1-b_4$ or $b_2-b_4$ is zero, replace $\frac{\e^{\al v}-1}{\al}$ by $v$ in  \eqref{eq:solution_curve_exp_without_monom3}.

It is left to show the sufficiency of $a_3 \leq a_2 = a_1 \leq a_4$ for the boundedness in (b2). One can use a Lyapunov function $V : \R^2 \to \R$ with $(\partial_1 V)(u,v) = -\e^{-a_1 u}(\e^{a_3 u}-\e^{a_4 u})$ and $(\partial_2 V)(u,v) = \e^{-b_4 v}(\e^{b_1 v}-\e^{b_2 v})$. Assuming $a_3 < a_4$ and $b_1 > b_2$ (recall the assumption on $\sign J$), the boundedness of the sublevel sets of $V$ is equivalent to $a_3 \leq a_1 \leq a_4$ and $b_2 \leq b_4 \leq b_1$, see Figure~\ref{fig:lyap_level_sets} for the illustration of the level sets of $V$. Thus, if in addition to $a_3 \leq a_1 \leq a_4$, the inequalities $b_2 \leq b_4 \leq b_1$ also hold, the boundedness of the solutions of the ODE~\eqref{ode:exp} follows.
In case the inequalities $b_2 \leq b_4 \leq b_1$ do not hold, we also need to take into account the sign structure of the vector field in order to conclude the boundedness of the solutions. If $b_2 \leq b_4 \nleq b_1$, the set
\begin{align*}
\{(u,v)\in\R^2~|~V(u,v)\leq c \text{ and } v \leq d\}
\end{align*}
is bounded and forward invariant for all $c$ and for all sufficiently large $d$. If $b_2 \nleq b_4 \leq b_1$, the set
\begin{align*}
\{(u,v)\in\R^2~|~V(u,v)\leq c \text{ and } v \geq d\}
\end{align*}
is bounded and forward invariant for all $c$ and for all sufficiently negative $d$. For an illustration of the constructed sets, see Figure~\ref{fig:lyap_made_bounded}.
\end{proof}

Finally, we prove our main result.

\begin{proof}[Proof of Theorem~\ref{thm:glob_asy_stab}]
We have to show that among the systems fulfilling condition (ii) in Proposition~\ref{prop:loc_asy_stab}
exactly those do not admit periodic or unbounded solutions that meet condition (ii) in the present theorem.

In fact, all systems fulfilling condition (ii) in Proposition~\ref{prop:loc_asy_stab} are covered by Lemma~\ref{lem:divergence}
and hence do not admit a periodic solution.
Now, statements (a), (b2), (c2), (d2), (e2) in Lemma~\ref{lem:bounded_solutions} characterize those systems that do not admit an unbounded solution.
\end{proof}


\section{The center problem} \label{sec:center_problem}

An equilibrium is a {\em center} if all nearby orbits are closed.

Our aim is to characterize all parameters $a_1$, $a_2$, $a_3$, $a_4$, $b_1$, $b_2$, $b_3$, $b_4$ for which the origin is a center of the ODE~\eqref{ode:exp}.
First, we look for first integrals, then we find centers of reversible systems, and indeed we prove that we have identified all possible centers.
Thereby we use that an equilibrium (of an analytic ODE) is a center if and only if all focal values (Lyapunov coefficients) vanish, 
see~\cite[Chapters~3.5 and~8.3]{kuznetsov:2004} or~\cite[Chapter~3.1]{romanovski:shafer:2009}.

Additionally, 
we characterize all the parameters for which the origin is a {\em global} center.
Finally,
we construct a system with two limit cycles.

Let $J$ be the Jacobian matrix of the ODE~\eqref{ode:exp} at the origin.
For the origin to be a center, it is a prerequisite that $\tr J = 0$ and $\det J > 0$.
(If $\det J = 0$, then the origin lies on a curve of equilibria.)
Hence, we assume these conditions throughout Subsections~\ref{subsec:first} and \ref{subsec:rev}.


\subsection{First integrals} \label{subsec:first}

We look for first integrals (constants of motion) for the ODE~\eqref{ode:exp}
and try an integrating factor of the form $e^{-\a u-\b v}$.
As we have seen in the proof of Lemma~\ref{lem:divergence}, the divergence is proportional to
\begin{align}\label{div}
(\a_1 - \a) \e^{\a_1 u + \b_1 v} - (\a_2 - \a) \e^{\a_2 u + \b_2 v}
+ (\b_3 - \b) \e^{\a_3 u + \b_3 v} - (\b_4 - \b) \e^{\a_4 u + \b_4 v}.
\end{align}

First, we consider
\begin{equation*}
\a_1 = \a_2 \text{ and } \b_3 = \b_4.
\quad \textbf{(case~S)}
\end{equation*}
Setting $\a = \a_1$ and $\b = \b_4$, all four terms vanish, and the system is integrable.
In fact, the ODE~\eqref{ode:exp} is orbitally equivalent to
\begin{align*}
\dot u &= \e^{(\b_1-\b_4) v} - \e^{(\b_2 -\b_4)v} , \\
\dot v &= \e^{(\a_3 -\a_1) u } - \e^{(\a_4 -\a_1) u }, \nonumber
\end{align*}
a system with separated variables.
This case has codimension 2 in the parameter space.

Next, we consider
\begin{equation*}
\a_1 = \a_3 \text{ and } \b_1 = \b_3.
\quad \textbf{(case~I1)}
\end{equation*}
The divergence \eqref{div} simplifies to
\begin{align*}
(\a_1 - \a + \b_3 - \b) \e^{\a_1 u + \b_1 v} 
- (\a_2 - \a) \e^{\a_2 u + \b_2 v}
- (\b_4 - \b) \e^{\a_4 u + \b_4 v} .
\end{align*}
Setting $\a = \a_2$ and $\b = \b_4$, the last two terms vanish, and the first term is zero due to $\tr J = 0$.
This case has codimension 3 in the parameter space.

The following cases can be treated in the same way (and have codimension 3 in the parameter space):
\begin{alignat*}{3}
\a_1 &= \a_4 \text{ and } \b_1 = \b_4 \quad
&& \textbf{(case~I2)} \\
\a_2 &= \a_4 \text{ and } \b_2 = \b_4
&& \textbf{(case~I3)} \\
\a_2 &= \a_3 \text{ and } \b_2 = \b_3
&& \textbf{(case~I4)}
\end{alignat*}
It is easy to see that case S is invariant under all symmetry operations.

On the other hand, cases I1--I4 can be obtained from each other by symmetry operations.
Below, we apply all symmetry operations to case I1:
\[
\xymatrix{
\text{I1} \ar@(l,u)^>(0.6){\textbf{r}_0,\textbf{s}_1} \ar[rr]^{\textbf{r}_3,\textbf{s}_0} \ar[dd]^{\textbf{r}_1,\textbf{s}_2} \ar[ddrr]^{\textbf{r}_2,\textbf{s}_3} && \text{I2} \\
\\
\text{I4} && \text{I3}
}
\]

The corresponding first integrals are displayed in Table \ref{table:first_integrals_S_I1_I2_I3_I4}.

\begin{table}
\begin{center}
\begin{tabular}{|c||c|}
\hline
case & first integral \\
\hline\hline
S & $\displaystyle \left(\frac{\e^{pu}}{p} - \frac{\e^{qu}}{q}\right) - \left(\frac{\e^{rv}}{r} - \frac{\e^{sv}}{s}\right)$, \quad
where $\begin{cases} p = a_3-a_1, q = a_4-a_1, \\ r = b_1-b_4, s = b_2-b_4 \end{cases}$ \\
\hline \hline
I1 & $\displaystyle +\frac{\e^{p(u-v)}}{p} + \frac{\e^{q u}}{q} - \frac{\e^{r v}}{r}$, \quad
where $\begin{cases} p = a_1 - a_2, q = a_4 - a_2, \\ r = b_2 - b_4 \end{cases}$ \\
\hline
I2 & $\displaystyle -\frac{\e^{p(u+v)}}{p} + \frac{\e^{q u}}{q} + \frac{\e^{r v}}{r}$, \quad
where $\begin{cases} p = a_1 - a_2, q = a_3 - a_2, \\ r = b_2 - b_3 \end{cases}$ \\
\hline
I3 & $\displaystyle +\frac{\e^{p(-u+v)}}{p} - \frac{\e^{q u}}{q} + \frac{\e^{r v}}{r}$, \quad
where $\begin{cases} p = a_1 - a_2, q = a_3 - a_1, \\ r = b_1 - b_3 \end{cases}$ \\
\hline
I4 & $\displaystyle -\frac{\e^{p(-u-v)}}{p} - \frac{\e^{q u}}{q} - \frac{\e^{r v}}{r}$, \quad
where $\begin{cases} p = a_1 - a_2, q = a_4 - a_1, \\ r = b_1 - b_4 \end{cases}$ \\
\hline
\end{tabular}
\vspace{4ex}
\caption{First integrals corresponding to cases S, I1, I2, I3, I4.
If $\al$ is zero in $\frac{\e^{\al z}}{\al}$ (in a first integral), replace $\frac{\e^{\al z}}{\al}$ by $z$.} \label{table:first_integrals_S_I1_I2_I3_I4}
\end{center}
\end{table}


\subsection{Reversible systems} \label{subsec:rev}

Let $R \colon \R^2 \to \R^2$ be a reflection along a line. A vector field $F \colon \R^2 \to \R^2$ (and the resulting dynamical system) is called reversible w.r.t.\ $R$ if $F\circ R = - R \circ F$. The following is a well-known fact, see e.g.\ \cite[Chapter~II, 4.6571]{nemytskii:stepanov:1960}, \cite[Chapter~3.5]{romanovski:shafer:2009}, 
or more generally \cite[Theorem~8.1]{devaney:1976}: an equilibrium of a reversible system which has purely imaginary eigenvalues and lies on the symmetry line of $R$ is a center.

The above definition can be generalized and the fact still holds:
A vector field (system) $F$ is reversible w.r.t.\ the reflection $R$ if $- R^{-1} \circ F \circ R = \lambda F$ with $\lambda \colon \R^2 \to \R_+$.
That is, if $F$ transformed by $R$ followed by time reversal is orbitally equivalent to $F$.

The ODE~\eqref{ode:exp} is reversible w.r.t.\ $\mathbf{s}_1$, the reflection along the line $u = v$, 
if the system transformed by $\mathbf{s}_1$ followed by time reversal is orbitally equivalent to the original system.
That is, if applying \eqref{params_tr} to \eqref{params_s1} is equivalent to the original scheme,
\begin{equation*}
\begin{pmatrix}
\b_4 & \b_3 & \b_2 & \b_1 \\
\a_4 & \a_3 & \a_2 & \a_1 
\end{pmatrix}
\sim
\begin{pmatrix}
\a_1 & \a_2 & \a_3 & \a_4 \\
\b_1 & \b_2 & \b_3 & \b_4 
\end{pmatrix} .
\end{equation*}

This holds if and only if there exist $\a,\b\in\R$ such that
\begin{alignat*} {3}
\a_1 - \a  &= \b_4, \quad & \b_1 - \b &= \a_4, \\
\a_2 - \a  &= \b_3,       & \b_2 - \b &= \a_3, \\
\a_3 - \a  &= \b_2,       & \b_3 - \b &= \a_2, \\
\a_4 - \a  &= \b_1,       & \b_4 - \b &= \a_1, 
\end{alignat*}
that is,
\begin{equation*}
\a_1 - \b_4 = \a_2 - \b_3 = \a_3 - \b_2 = \a_4 - \b_1
\end{equation*}
or, equivalently,
\begin{equation}\label{rev_s1}
\a_1+\b_1=\a_4+\b_4, \; \a_2+\b_2=\a_3+\b_3, \text{ and } \tr J=0 .
\quad \textbf{(case R1)}
\end{equation}

The ODE~\eqref{ode:exp} is reversible w.r.t.\ $\mathbf{s}_3$, the reflection along the line $u = -v$, 
if the system transformed by $\mathbf{s}_3$ followed by time reversal is orbitally equivalent to the original system.
That is, if applying \eqref{params_tr} to \eqref{params_s3} is equivalent to the original scheme,
\begin{equation*}
\begin{pmatrix}
- \b_3 & - \b_4 & - \b_1 & - \b_2 \\
- \a_3 & - \a_4 & - \a_1 & - \a_2 
\end{pmatrix}
\sim
\begin{pmatrix}
\a_1 & \a_2 & \a_3 & \a_4 \\
\b_1 & \b_2 & \b_3 & \b_4 
\end{pmatrix}.
\end{equation*}

This holds if and only if there exist $\a,\b\in\R$ such that
\begin{alignat*}{3} 
\a_1 - \a  &= -\b_3, \quad & \b_1 - \b &= -\a_3,  \\
\a_2 - \a  &= -\b_4,             & \b_2 - \b &= -\a_4, \\
\a_3 - \a  &= -\b_1,             & \b_3 - \b &= -\a_1, \\
\a_4 - \a  &= -\b_2,             & \b_4 - \b &= -\a_2, 
\end{alignat*}
that is,
\begin{equation*}
\a_1 + \b_3 = \a_2 + \b_4 = \a_3 + \b_1 = \a_4 + \b_2
\end{equation*}
or, equivalently,
\begin{equation}\label{rev_s3}
\a_1-\b_1=\a_3-\b_3, \; \a_2-\b_2=\a_4-\b_4, \text{ and } \tr J=0 .
\quad \textbf{(case R2)}
\end{equation}

The two families of reversible systems given by \eqref{rev_s1} and \eqref{rev_s3}, respectively, have codimension 3 in the parameter space.
The other two reflections, $\mathbf{s}_0$ and $\mathbf{s}_2$ (across the $u$- and $v$-axis), also lead to reversible systems, however, they are already covered by case S.

Cases R1 and R2 can be obtained from each other by symmetry operations.
Below, we apply all symmetry operations to case R1:
\[
\xymatrix{
\text{R1} \ar@(l,u)^{\textbf{r}_0,\textbf{r}_2,\textbf{s}_1,\textbf{s}_3} \ar[rr]^{\textbf{r}_1,\textbf{r}_3,\textbf{s}_0,\textbf{s}_2} &&  \text{R2}
}
\]

Finally, we remark that neither for R1 nor for R2 we were able to find a first integral. However, for systems that are in the intersection of R1 and R2, the functions
\begin{align*}
\left[1+\e^{r(u+v)}\right]\left[\e^{qu}+\e^{qv}\right]^{-\frac{r}{q}} \text{ and } 
\e^{-a_1 u - b_4 v}(\e^{qu}+\e^{qv})^{-\frac{q+r}{q}}
\end{align*}
serve as first integral and integrating factor, respectively, where $q = a_4-a_1$ and $r = a_3-a_1$.


\subsection{Main result} \label{subsec:main}

In Table \ref{table:cases}, we display the seven cases of centers we identified in Subsections \ref{subsec:first} and \ref{subsec:rev}.
Indeed these are all possible centers of the ODE~\eqref{ode:exp}.

\begin{table} 
\begin{center}
\begin{tabular}{|c||c|c|c|}
\hline
case & \multicolumn{2}{c|}{parameters} \\
\hline\hline
S & $\a_1=\a_2$ & $\b_3=\b_4$ \\ \hline \hline
I1 & $\a_1=\a_3$ & $\b_1=\b_3$ \\ \hline
I2 & $\a_1=\a_4$ & $\b_1=\b_4$ \\ \hline
I3 & $\a_2=\a_4$ & $\b_2=\b_4$ \\ \hline
I4 & $\a_2=\a_3$ & $\b_2=\b_3$ \\ \hline \hline
R1 & $\a_1+\b_1=\a_4+\b_4$ & $\a_2+\b_2=\a_3+\b_3$ \\ \hline
R2 & $\a_1-\b_1=\a_3-\b_3$ & $\a_2-\b_2=\a_4-\b_4$ \\ \hline
\end{tabular}
\vspace{4ex}
\caption{Special cases of the ODE~\eqref{ode:exp} having a center.
Additionally, in all cases $\tr J = \a_1 - \a_2 + \b_3 - \b_4  = 0$,  which is trivially fulfilled in case~S,
and $\det J = (\a_1-\a_2)(\b_3-\b_4)-(\a_3-\a_4)(\b_1-\b_2) > 0$.
} \label{table:cases}
\end{center}
\end{table}

\begin{thm} \label{thm:center_problem}
Let $J$ be the Jacobian matrix of the ODE~\eqref{ode:exp} at the origin,
that is,
\begin{align*}
J = \begin{pmatrix}
\a_1-\a_2 & \b_1-\b_2 \\ \a_3-\a_4 & \b_3-\b_4
\end{pmatrix}.
\end{align*}
The following statements are equivalent:

\begin{itemize}
\item[\rm 1.]
The origin is a center of the ODE~\eqref{ode:exp}.
\item[\rm 2.]
The eigenvalues of the Jacobian matrix at the origin are purely imaginary, that is, $\tr J=0$ and $\det J>0$,
and the first two focal values vanish.
\item[\rm 3.]
The parameter values $a_1$, $a_2$, $a_3$, $a_4$, $b_1$, $b_2$, $b_3$, $b_4$ belong to (at least) one of the seven cases S, I1, I2, I3, I4, R1, and R2 in Table \ref{table:cases}.
\end{itemize}
\end{thm}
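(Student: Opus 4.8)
The plan is to prove the three statements equivalent by closing the cycle $1 \Rightarrow 2 \Rightarrow 3 \Rightarrow 1$, so that the two routine implications absorb the material already assembled in Subsections~\ref{subsec:first} and~\ref{subsec:rev}, while the single hard implication $2 \Rightarrow 3$ isolates the focal value computation. First I would dispatch $1 \Rightarrow 2$: a center forces its linearization to have purely imaginary eigenvalues, hence $\tr J = 0$ and $\det J > 0$, and by the cited characterization (an equilibrium of an analytic system is a center if and only if \emph{all} focal values vanish) the first two focal values vanish in particular.

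Next I would handle $3 \Rightarrow 1$, recalling that every case in Table~\ref{table:cases} carries $\tr J = 0$ and $\det J > 0$, so the origin has purely imaginary eigenvalues throughout. In cases S and I1--I4 the explicit first integrals of Table~\ref{table:first_integrals_S_I1_I2_I3_I4} are analytic and nonconstant near the origin; since an equilibrium with purely imaginary eigenvalues is either a focus or a center, and a focus admits no nonconstant continuous first integral in a neighborhood, the origin must be a center. In cases R1 and R2 the system is reversible with respect to $\mathbf{s}_1$ resp.\ $\mathbf{s}_3$, the origin lies on the corresponding symmetry line, and the eigenvalues are purely imaginary, so the reversibility fact recalled in Subsection~\ref{subsec:rev} again yields a center.

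The crux is $2 \Rightarrow 3$. I would first use orbital equivalence (multiplication by $\e^{-\a u-\b v}$) together with the uniform scaling of Subsection~\ref{subsec:exponential} to trim the eight parameters, impose $\tr J = 0$, and bring the linear part to the normal form $\begin{pmatrix} 0 & -\omega \\ \omega & 0 \end{pmatrix}$ by a linear change of coordinates. Expanding each $\e^{\a_i u + \b_i v}$ as a power series turns the right-hand side into a polynomial vector field to any prescribed order, with coefficients polynomial in the $\a_i, \b_i$; feeding these into the standard focal value formulas expresses $L_1$ and $L_2$ as polynomials in the parameters. I would then set $L_1 = 0$, factor it, and on each resulting branch impose $L_2 = 0$, aiming to show that the common zero set, intersected with $\{\tr J = 0,\ \det J > 0\}$, is exactly the union of the seven linear strata of Table~\ref{table:cases}. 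Throughout, the $D_4$ action and time reversal from Subsection~\ref{subsec:sym} permute the seven cases (S is fixed, while I1--I4 and R1--R2 form single orbits, as the diagrams there show), so it suffices to analyze one representative per orbit and transport the conclusion, keeping the case analysis finite. A key structural point is that only $L_1$ and $L_2$ are required: once their vanishing forces one of the seven cases, the already-established implication $3 \Rightarrow 1$ guarantees a genuine center, so all higher focal values vanish automatically---a Bautin-type argument showing that the center ideal is generated by the first two focal values.

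The main obstacle is precisely the explicit computation and algebraic decomposition in $2 \Rightarrow 3$. The vector field is transcendental, so the focal values must be extracted from truncated exponential expansions (through order five for $L_2$), and $L_2$ is only meaningful on the branches where $L_1$ already vanishes. The real danger is a combinatorial blow-up of sub-cases and the appearance of spurious factors; the remedy is to exploit the $D_4$ symmetry aggressively to collapse equivalent branches and to recognize each surviving factor as one of the defining linear forms of the seven cases, thereby matching the zero set of $(L_1, L_2)$ with the union in Table~\ref{table:cases}.
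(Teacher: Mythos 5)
Your overall architecture is essentially the paper's own proof: the cycle $1 \Rightarrow 2 \Rightarrow 3 \Rightarrow 1$, with first integrals for S, I1--I4 and reversibility for R1, R2 giving $3 \Rightarrow 1$, and a computational factorization of $L_1$, $L_2$ followed by a case distinction giving $2 \Rightarrow 3$ (the paper reduces by orbital equivalence to the normalized system \eqref{ode:simple} with $a_1=b_1=0$ and computes the focal values by Maple, which matches your plan; your additional idea of using the $D_4$ action to collapse branches is a reasonable economy the paper does not use at that point, though note that the normalization $a_1=b_1=0$ is not $D_4$-equivariant, so transported branches would need re-normalizing). Your ``Bautin-type'' observation---that only $L_1$ and $L_2$ are needed because $3 \Rightarrow 1$ closes the loop---is exactly the logic of the paper, and your focus-versus-center argument for $3 \Rightarrow 1$ is sound, since an analytic first integral that is constant on a neighborhood of the origin is constant everywhere.

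There is, however, one genuine gap, in $1 \Rightarrow 2$: you assert that ``a center forces its linearization to have purely imaginary eigenvalues.'' This is false for general analytic planar systems---nilpotent centers exist, e.g.\ $\dot u = v$, $\dot v = -u^3$ has a center at the origin with both eigenvalues zero---so the degenerate case $\det J = 0$ must be excluded by an argument specific to the system, not by a general principle. For the ODE~\eqref{ode:exp} the exclusion is immediate from the binomial structure: the equilibrium set is the intersection of the two lines $(\a_1-\a_2)u+(\b_1-\b_2)v=0$ and $(\a_3-\a_4)u+(\b_3-\b_4)v=0$, whose normal vectors are precisely the rows of $J$. If $\det J = 0$, these rows are linearly dependent (and if a row vanishes, the corresponding component of the vector field is identically zero), so the equilibrium set contains a whole line through the origin; the origin is then not an isolated equilibrium and cannot be a center. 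This is exactly the one-line argument the paper uses (``if $\det J=0$, the origin lies on a curve of equilibria''). The cases $\det J<0$ and $\tr J\neq 0$ with $\det J>0$ are excluded by standard linearization, as you implicitly do, so once you insert the $\det J=0$ argument your proof of $1 \Rightarrow 2$ is complete.
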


\begin{proof}
1 $\Rightarrow$ 2: If $J$ has a zero eigenvalue, that is, $\det J=0$, then the origin lies on a curve of equilibria
and cannot be a center.
Hence, 
the eigenvalues of $J$ are purely imaginary,
and all focal values vanish.

2 $\Rightarrow$ 3:
For the computation of the first two focal values, $L_1$ and $L_2$,
and the case distinction implied by $\tr J=0$, $\det J>0$, and $L_1=L_2=0$,
see Subsection~\ref{subsec:case}.

3 $\Rightarrow$ 1:
For the cases S, I1, I2, I3, and I4 in Table~\ref{table:cases}, we have found first integrals in Subsection~\ref{subsec:first}.
The remaining cases R1 and R2 are reversible systems, see Subsection~\ref{subsec:rev}.
\end{proof}

\subsection{Computation of focal values and case distinction} \label{subsec:case}

Instead of the ODE~\eqref{ode:exp}, we consider
\begin{align} \label{ode:simple}
\dot u &= 1 - \e^{\a_2 u + \b_2 v} , \\
\dot v &= \e^{\a_3 u + \b_3 v} - \e^{\a_4 u + \b_4 v} . \nonumber
\end{align}
After performing the substitutions
\begin{alignat}{3} \label{eq:subst}
\a_2 &\to \a_2-\a_1 , & \quad & \b_2 \to \b_2-\b_1 , \\
\a_3 &\to \a_3-\a_1 , && \b_3 \to \b_3-\b_1 , \nonumber \\
\a_4 &\to \a_4-\a_1 , && \b_4 \to \b_4-\b_1 , \nonumber
\end{alignat}
the ODE~\eqref{ode:simple} is orbitally equivalent to \eqref{ode:exp}.
Using
\[
\tr J = - \a_2 + \b_3 - \b_4  = 0 ,
\]
we compute $\det J$ and the first two focal values, $L_1$ and $L_2$.
We find
\[
\det J = (\a_3-\a_4)\b_2-(\b_3-\b_4)^2
\]
and note that $\det J>0$ implies $\a_3 \neq \a_4$ and $\b_2 \neq 0$.
Further,
using the Maple program in \cite{Kuznetsova:2012},
we find
\begin{equation*}
L_1 = - \frac{\pi}{8} \, \frac{(\b_3 - \b_4) \left[ ( \a_3 \a_4 + \a_3 \b_4 - \a_4 \b_3 ) \b_2 - (\a_3 - \a_4) \b_3 \b_4 \right]}{\sqrt{\det J} \, \b_2} .
\end{equation*}
Expressions for $L_2$ (in case $L_1=0$) will be given below.

We show that all parameters $(\a_2,\b_2,\a_3,\b_3,\a_4,\b_4) \in \R^6$ in the ODE~\eqref{ode:simple} for which
\begin{gather*}
\tr J=L_1=L_2=0 \\
\text{and } \det J>0
\end{gather*}
belong to one of the seven cases in Table~\ref{table:simple}.

\begin{table} 
\begin{center}
\begin{tabular}{|c||c|c|c|}
\hline
case & \multicolumn{2}{c|}{parameters} \\
\hline\hline
S & $\a_2=0$ & $\b_3=\b_4$ \\ \hline \hline
I1 & $\a_3=0$ & $\b_3=0$ \\ \hline
I2 & $\a_4=0$ & $\b_4=0$ \\ \hline
I3 & $\a_2=\a_4$ & $\b_2=\b_4$ \\ \hline
I4 & $\a_2=\a_3$ & $\b_2=\b_3$ \\ \hline \hline
R1 & $\a_4+\b_4=0$ & $\a_2+\b_2=\a_3+\b_3$ \\ \hline
R2 & $\a_3-\b_3=0$ & $\a_2-\b_2=\a_4-\b_4$ \\ \hline
\end{tabular}
\vspace{4ex}
\caption{Special cases of the ODE~\eqref{ode:simple} having a center.
Additionally, in all cases $\tr J = - \a_2 + \b_3 - \b_4  = 0$,  which is trivially fulfilled in case~S,
and $\det J = (\a_3-\a_4)\b_2-(\b_3-\b_4)^2 > 0$.
} \label{table:simple}
\end{center}
\end{table}

To begin with, $L_1=0$ implies either
\begin{enumerate}
\item[(a)] $\b_3=\b_4$,
\item[(b)] $\b_2=\frac{(\a_3 - \a_4) \b_3 \b_4}{D}$, where $D=\a_3 \a_4 + \a_3 \b_4 - \a_4 \b_3\neq0$, or
\item[(c)] $D=0$ and either $\b_3 = 0$ or $\b_4=0$.
Equivalently, either $\b_3 = 0$ and $\a_3(\a_4+\b_4)=0$ or $\b_4=0$ and $\a_4(\a_3-\b_3)=0$.
That is, either
\begin{itemize}
\item[(c1)] $\b_3=0$, $\a_3=0$,
\item[(c2)] $\b_3=0$, $\a_4+\b_4=0$,
\item[(c3)] $\b_4=0$, $\a_4=0$, or
\item[(c4)] $\b_4=0$, $\a_3-\b_3=0$.
\end{itemize}
\end{enumerate}

Case~(a), where $\b_3=\b_4$ and $\a_2=0$ (due to $\tr J=0$), corresponds to case~S in Table~\ref{table:simple}.

In case~(b), where $D \neq 0$ (and $\b_3,\b_4\neq0$ due to $\b_2\neq0$), we find
\[
L_2 = - \frac{\pi}{288} \, \frac{(\b_3 - \b_4) (\a_4 + \b_4) (\a_3 - \b_3) (\a_3 - \b_3 + \b_4) (\a_4 + \b_4 - \b_3) (\a_3 \b_4 - \a_4 \b_3)^2}{\sqrt{\det J} \, D \, \b_3 \b_4} ,
\]
using the Maple program in \cite{Kuznetsova:2012}.
Now, $L_2=0$ implies that at least one of six factors is zero:
\begin{enumerate}[-]
\item
As shown above, the first subcase $\b_3-\b_4=0$ is covered by case~S in Table~\ref{table:simple}.
\item
The subcase $\a_4 + \b_4=0$ implies $D=-\a_4 \b_3=\b_3 \b_4$ and hence $\b_2=\a_3-\a_4$.
Adding $\a_2=\b_3-\b_4$ (due to $\tr J=0$) yields $\a_2+\b_2=\a_3+\b_3$,
and the situation is covered by case~R1. 
\item
The subcase $\a_3 - \b_3=0$ also implies $D=\a_3 \b_4=\b_3 \b_4$ and hence $\b_2=\a_3-\a_4$.
Using $\a_2=\b_3-\b_4$ (due to $\tr J=0$) yields $\a_2-\b_2=\a_4-\b_4$,
and the situation is covered by case~R2.
\item
The subcase $\a_3 - \b_3 + \b_4=0$ implies $D=(\a_3-\a_4)\b_4$ and hence $\b_2=\b_3$.
Using $\tr J=-\a_2+\b_3-\b_4=0$ yields $\a_2=\a_3$,
and the situation is covered by case~I4.
\item
The subcase $\a_4 + \b_4 - \b_3=0$ implies $D=(\a_3-\a_4)\b_3$ and hence $\b_2=\b_4$.
Using $\tr J=-\a_2+\b_3-\b_4=0$ yields $\a_2=\a_4$,
and the situation is covered by case~I3.
\item
Finally, the subcase $\a_3 \b_4 - \a_4 \b_3=0$ implies $D=\a_3 \a_4$, $\b_2=\frac{(\a_3 - \a_4) \b_3 \b_4}{\a_3 \a_4}$,
and hence
\[
\det J = \frac{(\a_3 - \a_4)^2 \b_3 \b_4 - \a_3 \a_4 (\b_3-\b_4)^2}{\a_3 \a_4} = \frac{(\a_3 \b_3-\a_4 \b_4)(\a_3 \b_4 - \a_4 \b_3)}{\a_3 \a_4} = 0
\]
which need not be considered further.
\end{enumerate}

Case~(c1), where $\a_3=0$ and $\b_3=0$, corresponds to case~I1 in Table~\ref{table:simple}.

In case~(c2), where $\a_4+\b_4=0$ and $\b_3=0$,
we find
\[
L_2 = - \frac{\pi}{288} \, \frac{\a_3 \a_4^2 (\a_3-\a_4) (\a_4+\b_2) (\a_3-\a_4-\b_2)}{\sqrt{\det J} \, \b_2} .
\]
Now, $L_2=0$ implies that at least one of five factors is zero:
\begin{enumerate}[-]
\item
The first subcase $\a_3=0$ (and $\b_3=0$) is covered by case~I1 in Table~\ref{table:simple}.
\item
The subcase $\a_4=0$ (and hence $\b_4=0$) is covered by case~I2.
\item
As mentioned above, the subcase $\a_3-\a_4=0$ implies $\det J\le0$
which need not be considered further.
\item
The subcase $\a_4+\b_2=0$ (and $\a_4+\b_4=0$) implies $\b_2=\b_4$.
Moreover, $\a_2=\b_3-\b_4=\a_4$ (due to $\tr J=0$, $\b_3=0$ and $\a_4+\b_4=0$),
and the situation is covered by case~I3.
\item 
It remains to consider the subcase $\a_3-\a_4-\b_2=0$.
Adding $\tr J=-\a_2+\b_3-\b_4=0$ and using $\a_4+\b_4=0$ yields $\a_2+\b_2=\a_3+\b_3$,
and the situation is covered by case~R1.
\end{enumerate}

Case~(c3), where $\a_4=0$ and $\b_4=0$, corresponds to case~I2 in Table~\ref{table:simple}.

Finally, in case~(c4), where $\a_3-\b_3=0$ and $\b_4=0$, we find
\begin{align} \label{eq:L2_case_c4}
L_2 = \frac{\pi}{288} \, \frac{\a_3^2 \a_4 (\a_3-\a_4) (\a_3-\b_2) (\a_3-\a_4-\b_2)}{\sqrt{\det J} \, \b_2} .
\end{align}
Again, $L_2=0$ implies that at least one of five factors is zero.
The resulting subcases are covered by cases~I1, I2, ($\det J\le0$), I4, and R2 in Table~\ref{table:simple}.

To obtain the case distinction for the ODE~\eqref{ode:exp}, we perform the substitutions~\eqref{eq:subst} in Table~\ref{table:simple}.
The result is displayed in Table~\ref{table:cases}. 


\subsection{Global centers} \label{subsec:global_center}

We say that the origin is a {\em global} center of the ODE~\eqref{ode:exp} if all orbits are closed and surround the origin.

\begin{thm} \label{thm:global_center}
Let the origin be a center of the ODE~\eqref{ode:exp}. Then it is a {\em global} center if and only if
\begin{align} \label{eq:min_min_max_max}
\min(a_3,a_4) \leq \min(a_1,a_2) &\leq \max(a_1,a_2) \leq \max(a_3,a_4) \text{ and} \\
\min(b_1,b_2) \leq \min(b_3,b_4) &\leq \max(b_3,b_4) \leq \max(b_1,b_2). \nonumber
\end{align}
\end{thm}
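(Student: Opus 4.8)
The plan is to separate the two directions and, for each, reduce to the boundedness analysis already carried out in Lemma~\ref{lem:bounded_solutions}. The crucial observation is that condition~\eqref{eq:min_min_max_max} is exactly the ``min-min-max-max'' interlacing that appears (case by case) in the sufficiency parts (a), (b2), (c2), (d2), (e2) of Lemma~\ref{lem:bounded_solutions}, and that a center is global precisely when no orbit escapes to infinity. Since the origin is assumed to be a center, every orbit near the origin is closed; the only way the center can fail to be global is that some orbit farther out is unbounded (or spirals, but spiraling is excluded once we know all focal values vanish and divergence arguments apply). So the heart of the matter is: \emph{the center is global if and only if all solutions are bounded}, and boundedness is governed by~\eqref{eq:min_min_max_max}.

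First I would argue the easy implication that boundedness of all forward orbits forces the center to be global. If the origin is a center, there is a punctured neighborhood filled by closed orbits; let $A$ be the union of all closed orbits surrounding the origin, an open annular (in fact disk-minus-point) region whose outer boundary is either empty (global center) or a separatrix/polycycle. If $A$ is not the whole plane, its outer boundary is a nonempty invariant set not consisting of closed orbits; by Poincar\'e--Bendixson, together with the absence of periodic orbits of the scaled field (Lemma~\ref{lem:divergence}) and the absence of equilibria other than the origin (since $\det J \ne 0$ and nullclines are lines), any orbit on or just outside this boundary must be unbounded. Hence bounded orbits $\Rightarrow$ global center, and an unbounded orbit $\Rightarrow$ not a global center. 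This reduces the theorem to: under the center hypothesis, all solutions are bounded $\iff$~\eqref{eq:min_min_max_max}.

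Next I would dispatch this equivalence by going through the seven center cases S, I1--I4, R1, R2 of Table~\ref{table:cases}, using the symmetry group $D_4$ to cut the work. Cases I1--I4 are related by symmetry operations, as are R1 and R2; and the interlacing condition~\eqref{eq:min_min_max_max} is itself $D_4$-equivariant (the operations permute $\{a_1,a_2\}\leftrightarrow\{a_3,a_4\}$ and $\{b_1,b_2\}\leftrightarrow\{b_3,b_4\}$ and flip signs in a way that preserves the two chains of inequalities). So it suffices to verify the equivalence for one representative of I and one of R, plus the self-symmetric case S. For case S the system has separated variables (Subsection~\ref{subsec:first}), so boundedness of the level sets of the explicit first integral in Table~\ref{table:first_integrals_S_I1_I2_I3_I4} can be read off directly, and one checks it matches~\eqref{eq:min_min_max_max}; similarly for I1 using its first integral. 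For the reversible cases R1, R2, where no first integral is available, I would instead invoke Lemma~\ref{lem:bounded_solutions} directly: the center hypothesis $\operatorname{tr}J=0$ places us on the boundary between the $\sign J$ patterns of that lemma, and the equivalence statements (b2),(c2),(d2),(e2) give boundedness $\iff$ the relevant two-sided inequality on the $a_i$ or $b_i$, which I would then show coincides with~\eqref{eq:min_min_max_max} in each case.

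The main obstacle I anticipate is the reversible cases R1 and R2, precisely because there is no first integral to bound sublevel sets. Here one must combine the reversibility (which already guarantees orbits near the origin close up) with a global boundedness argument coming from the nullcline/invariant-region geometry and the negative-divergence scaling of Lemma~\ref{lem:divergence}, showing that when~\eqref{eq:min_min_max_max} holds the closed orbits fill the entire plane, while when it fails one can construct an unbounded forward-invariant wedge exactly as in the proofs of (b1)/(b2) in Lemma~\ref{lem:bounded_solutions}. Checking that the algebraic conditions defining R1 and R2 (the sum/difference relations on $a_i,b_i$ together with $\operatorname{tr}J=0$) translate correctly into the interlacing~\eqref{eq:min_min_max_max}, and that the symmetry reductions are applied consistently, will be the delicate bookkeeping; but no genuinely new analytic idea beyond the two lemmas should be required.
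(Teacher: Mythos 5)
There is a genuine gap, concentrated exactly where you anticipated trouble: the reversible cases R1 and R2. Your plan leans twice on Lemma~\ref{lem:divergence}, but that lemma is vacuous under the center hypothesis: it requires $a_1\le a_2$ and $b_3\le b_4$ with $(a_1-a_2,b_3-b_4)\neq(0,0)$, while $\tr J=a_1-a_2+b_3-b_4=0$ forces these two differences to have opposite signs (or both to vanish, which the lemma excludes). Indeed no Dulac function with sign-definite divergence can exist in a region containing closed orbits, since by Green's theorem the integral of $\dive(hf)$ over the disk bounded by a closed orbit is zero. So both your Poincar\'e--Bendixson step (``absence of periodic orbits of the scaled field'') and your fallback ``negative-divergence scaling'' for R1/R2 are unavailable. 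Relatedly, the reduction ``global center $\iff$ all orbits bounded'' is itself not automatic: with Lemma~\ref{lem:divergence} inapplicable, you would still have to exclude a limit cycle or semistable periodic orbit enclosing the period annulus, a point the paper never needs to confront because it argues directly that orbits close up.

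Second, and decisive: for R1/R2 outside case S, both diagonal entries of $J$ are nonzero and of opposite signs, so $\sign J$ matches the patterns in (b1), (c1), (d1), (e1) of Lemma~\ref{lem:bounded_solutions} --- the parts that assert only \emph{necessity} (``boundedness implies\ldots''). The equivalence parts (b2), (c2), (d2), (e2) that your argument invokes require a vanishing diagonal entry ($a_1=a_2$ or $b_3=b_4$), which never occurs in R1/R2 without already being in case S. Hence the sufficiency direction --- condition \eqref{eq:min_min_max_max} implies the center is global --- is precisely what the two lemmas do \emph{not} give, and your sketch supplies no mechanism for it. The paper fills this hole with a genuinely new argument: it reduces by symmetry to R1, rewrites the system in the orbitally equivalent form \eqref{ode:R1_only_a} involving only $a_1,\dots,a_4$ (where the two inequality chains of \eqref{eq:min_min_max_max} coincide), and then shows every orbit is closed by combining the nullcline geometry and sign structure of the vector field with the sign of $\dot u+\dot v$ (case $a_1>a_2$), or a nullcline-crossing tracking argument via $\dot u-\dot v$ (case $a_1<a_2$), together with the reflection symmetry of orbits across the line $u=v$. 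Your necessity direction (unbounded forward-invariant wedges, i.e.\ the (b1)-type statements of Lemma~\ref{lem:bounded_solutions}) and your treatment of S, I1--I4 via level sets of the first integrals do match the paper; the missing analytic idea is the $\dot u\pm\dot v$/reversibility closing argument.
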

\begin{proof}
For the cases S, I1, I2, I3, I4, the theorem follows immediately by investigating the level sets of the first integrals, see Table \ref{table:first_integrals_S_I1_I2_I3_I4}.

Below, we will implicitly use the easily checkable fact that condition \eqref{eq:min_min_max_max} is invariant under any of the symmetry operations $\mathbf{r}_0$, $\mathbf{r}_1$, $\mathbf{r}_2$, $\mathbf{r}_3$, $\mathbf{s}_0$, $\mathbf{s}_1$, $\mathbf{s}_2$, $\mathbf{s}_3$.

It suffices to show the theorem for the case R1, because the case R2 then follows by applying any of the symmetry operations $\mathbf{r}_1$, $\mathbf{r}_3$, $\mathbf{s}_0$, $\mathbf{s}_2$. In the sequel, we consider only R1. Also, we can assume that the system under consideration is not in case S, and thus $\sign J$ is one of
\begin{align*}
\begin{pmatrix} + & + \\ - & - \end{pmatrix},
\begin{pmatrix} + & - \\ + & - \end{pmatrix},
\begin{pmatrix} - & - \\ + & + \end{pmatrix}, 
\begin{pmatrix} - & + \\ - & + \end{pmatrix}.
\end{align*}
The 1st and the 3rd of these four cases can be transformed to each other by $\mathbf{s}_1$ and $\mathbf{s}_3$. The same applies to the 2nd and the 4th. Thus, we restrict our attention to the cases 
\begin{align*}
\begin{pmatrix} + & + \\ - & - \end{pmatrix} \text{ and }
\begin{pmatrix} - & + \\ - & + \end{pmatrix}.
\end{align*}

Another short calculation shows that the two chains of inequalities in \eqref{eq:min_min_max_max} are equivalent for R1. Note also that in the case R1 the ODE~\eqref{ode:exp} can be written in the orbitally equivalent form
\begin{align} \label{ode:R1_only_a}
\dot u &= \e^{\a_1 u + \a_4 v} - \e^{\a_2 u + \a_3 v} , \\
\dot v &= \e^{\a_3 u + \a_2 v} - \e^{\a_4 u + \a_1 v} . \nonumber
\end{align}
Therefore, we have to show that
\begin{enumerate}[{\rm(i)}]
\item if $a_1 > a_2$ and $a_3 < a_4$ then the origin is a global center for the ODE~\eqref{ode:R1_only_a} if and only if $a_3 \leq a_2 \leq a_1 \leq a_4$ and
\item if $a_1 < a_2$ and $a_3 < a_4$ then the origin is a global center for the ODE~\eqref{ode:R1_only_a} if and only if $a_3 \leq a_1 \leq a_2 \leq a_4$.
\end{enumerate}
In both of the cases (i) and (ii), the necessity of the inequality chain between $a_1$, $a_2$, $a_3$, $a_4$ follows immediately from Lemma~\ref{lem:bounded_solutions}.

The sufficiency in the case (i) follows directly by taking into account the nullcline geometry, the sign structure of the vector field, the fact that all the orbits are symmetric w.r.t the $u=v$ line, and the easily checkable fact that $\dot u + \dot v < 0$ whenever $u > v$, while $\dot u + \dot v > 0$ whenever $u < v$, see the top panel in Figure~\ref{fig:glob_center_proof}. (The sign of $\dot u + \dot v$ equals the sign of $v-u$, because both of the differences $\e^{\a_1 u + \a_4 v}-\e^{\a_4 u + \a_1 v}$ and $\e^{\a_3 u + \a_2 v} - \e^{\a_2 u + \a_3 v}$ are nonpositive (respectively, nonnegative) for $u>v$ (respectively, for $u<v$) and $\dot{u}+\dot{v}$ can be zero only if $u=v$, because $a_1=a_4$ and $a_2=a_3$ would imply $\det J =0$.)

In case (ii), we consider $\dot{u}-\dot{v}$ instead of $\dot{u}+\dot{v}$. We cannot determine where exactly it is positive and negative. However, it is enough that we know that it is negative (respectively, positive) whenever both of $(a_1-a_3)u+(a_4-a_2)v$ and $(a_4-a_2)u+(a_1-a_3)v$ are negative (respectively, positive), see the bottom panel in Figure~\ref{fig:glob_center_proof}. Starting from an initial point with $\dot{u}<0$ and $\dot{v}>0$, the solution will cross the $u$-nullcline and enter the region, where $\dot{u}>0$ and $\dot{v}>0$. Then the solution will reach the region, where $\dot{u}-\dot{v}>0$. Afterwards, it hits the $v$-nullcline and then the $u$-nullcline, after which $\dot{u}<0$ and therefore the solution will reach the region, where $\dot{u}-\dot{v}<0$. From there, it will hit the $v$-nullcline again.
\end{proof}


Finally, we remark that the center is clockwise (respectively, anticlockwise) if and only if $a_3<a_4$ and $b_1>b_2$ (respectively, $a_3>a_4$ and $b_1<b_2$).


\subsection{Limit cycles} \label{subsec:limit}

For the ODE~\eqref{ode:exp},
we are also interested in asymptotic stability
when the trace of the Jacobian matrix vanishes,
that is, when linearization does not give any information.
In fact, using the (sign of the) first focal value computed in Subsection~\ref{subsec:case},
we characterize super- and subcritical Hopf bifurcations resulting in a stable or unstable limit cycle, see also \cite{lewis:1991}.

\begin{prop} \label{hopf}
For the ODE~\eqref{ode:exp}, let $\det J > 0$ and $\tr J = 0$ at the origin and
\begin{align*}
\ell_1 = - (\b_3 - \b_4) & \bigg[ (\a_3-\a_1) (\a_4-\a_1) + (\a_3-\a_1) (\b_4-\b_1) - (\a_4-\a_1) (\b_3-\b_1) \bigg. \\
& \bigg. - \frac{(\a_3 - \a_4) (\b_3-\b_1) (\b_4-\b_1)}{(\b_2-\b_1)} \bigg] .
\end{align*}
If $\ell_1 < 0$, the origin is asymptotically stable.
If $\ell_1 > 0$, it is repelling.

If we consider a one-parameter family of ODEs~\eqref{ode:exp}
along which the eigenvalues of the Jacobian matrix cross the imaginary axis with positive speed,
for example, with parameter $\mu = \tr J$,
then an Andronov-Hopf bifurcation occurs at $\mu=0$.
If $\ell_1 < 0$, the bifurcation is supercritical (and there exists an asymptotically stable closed orbit for small $\mu>0$).
If $\ell_1 > 0$, it is subcritical (and there exists a repelling closed orbit for small $\mu<0$).
\end{prop}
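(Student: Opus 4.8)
The plan is to relate the sign of the stated expression $\ell_1$ to the first focal value $L_1$ already computed in Subsection~\ref{subsec:case}, and then invoke the standard Hopf bifurcation theory. First I would observe that the ODE~\eqref{ode:exp} is orbitally equivalent to the simplified ODE~\eqref{ode:simple} via the substitutions~\eqref{eq:subst}, which shift all exponents by subtracting $(\a_1,\b_1)$. Since orbital equivalence preserves the qualitative dynamics near the equilibrium (and in particular the sign of the first Lyapunov coefficient, which governs stability), it suffices to work with the expression for $L_1$ derived for the simplified system. Writing out $L_1$ in the original parameters by reversing the substitution — replacing $\a_i \to \a_i - \a_1$, $\b_i \to \b_i - \b_1$ in the formula
\[
L_1 = - \frac{\pi}{8} \, \frac{(\b_3 - \b_4) \left[ ( \a_3 \a_4 + \a_3 \b_4 - \a_4 \b_3 ) \b_2 - (\a_3 - \a_4) \b_3 \b_4 \right]}{\sqrt{\det J} \, \b_2}
\]
— I expect to recover exactly $\ell_1$ up to the positive factor $\frac{\pi}{8\sqrt{\det J}\,(\b_2-\b_1)}$, so that $\sign L_1 = \sign \ell_1$. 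This identification is the conceptual heart of the proposition.

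Next I would verify the sign claim for the equilibrium itself. Since $\tr J = 0$ and $\det J > 0$, the origin has purely imaginary eigenvalues, so linearization is inconclusive and the stability is decided by the first nonzero focal value. By the standard theory of the Lyapunov--Poincaré first focal value (see the references \cite{kuznetsov:2004,romanovski:shafer:2009} cited earlier), $L_1 < 0$ implies the origin is asymptotically stable and $L_1 > 0$ implies it is repelling; translating through $\sign L_1 = \sign \ell_1$ gives the first two sentences of the proposition.

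For the bifurcation statement I would set up a one-parameter family with $\mu = \tr J$ as bifurcation parameter, noting that the transversality (eigenvalue crossing the imaginary axis with positive speed) is satisfied by construction when we move $\mu$ through $0$. The Andronov--Hopf bifurcation theorem then applies: the direction and stability of the bifurcating periodic orbit are governed by the sign of the first Lyapunov coefficient, whose sign coincides with $\sign \ell_1$. Thus $\ell_1 < 0$ yields a supercritical bifurcation with a stable limit cycle for small $\mu > 0$, while $\ell_1 > 0$ yields a subcritical bifurcation with a repelling limit cycle for small $\mu < 0$.

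The main obstacle is purely computational rather than conceptual: one must carefully carry out the back-substitution in the $L_1$ formula and confirm that it collapses to the bracketed expression in $\ell_1$, tracking the extra factor of $(\b_2 - \b_1)$ that appears in the denominator after undoing the shift (this factor is nonzero and its sign must be controlled, which follows from $\det J > 0$ forcing $\b_2 \neq \b_1$ in the shifted coordinates). Once that algebraic identity is confirmed, everything else is a direct appeal to classical Hopf bifurcation theory, and no genuinely new dynamical argument is needed.
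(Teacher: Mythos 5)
Your proposal is correct and follows essentially the same route as the paper, which obtains Proposition~\ref{hopf} precisely by back-substituting \eqref{eq:subst} into the first focal value $L_1$ computed for the simplified ODE~\eqref{ode:simple} in Subsection~\ref{subsec:case} and then invoking standard Andronov--Hopf theory; carrying out that substitution indeed gives $L_1 = \frac{\pi}{8\sqrt{\det J}}\,\ell_1$, so $\sign L_1 = \sign \ell_1$. One small bookkeeping correction: the factor $(\b_2-\b_1)$ ends up \emph{inside} $\ell_1$ (in the last term's denominator), so the external proportionality constant is just $\pi/(8\sqrt{\det J})>0$ and no control of the sign of $\b_2-\b_1$ is needed --- only its nonvanishing, which, as you note, follows from $\det J>0$.
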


Further, we are interested in a degenerate Hopf or Bautin bifurcation resulting in {\em two} limit cycles,
see~\cite[Section~8.3]{kuznetsov:2004}.
Indeed, using the first two focal values computed in Subsection~\ref{subsec:case},
we construct an S-system with two limit cycles.

In particular, we consider case (c4) in~Subsection~\ref{subsec:case}:
we set $a_1=b_1=b_4 = 0$, $a_3 = b_3 = a_2$ and hence $\tr J=L_1=0$
and choose $a_2$, $b_2$, $a_4$ such that $L_2 <0$ (and $\det J > 0$)
with $L_2$ given by Equation~\eqref{eq:L2_case_c4},
for example, $a_2=-1$, $b_2=-2$, $a_4=4$.
By slightly decreasing $b_3$ and $a_2$ (thereby keeping $b_3=a_2$ and $\tr J = 0$),
we obtain $L_1 > 0$,
and the resulting system has a stable limit cycle.
Finally, by slightly increasing $a_2$ such that $\tr J < 0$,
we create a small unstable limit cycle via a subcritical Hopf bifurcation.

It remains open, whether the ODE~\eqref{ode:exp} admits more than two limit cycles.
In fact, one could formulate a ``fewnomial version'' of the second part of Hilbert's 16th problem for planar power-law systems defined on the positive quadrant:
Khovanskii~\cite{Khovanskii1991} gives an explicit upper bound on the number of nondegenerate positive solutions of $n$ generalized polynomial equations in $n$ variables
in terms of the number of distinct monomials; see also~\cite{Sotille2011}.
Similarly, we can ask for an upper bound on the number of limit cycles of planar power-law systems (with finitely many equilibria) in terms of the number of monomials. 

In analogy to the cyclicity problem (the local version of Hilbert's 16th problem), we can also ask for an upper bound on the number of limit cycles that can bifurcate from a center,
when we fix the number of monomials and their signs and perturb the positive coefficients and real exponents.
Our example shows that in the simplest case with two binomials this upper bound is at least two.
For a computational algebra approach to this question for planar polynomial systems with real or complex coefficients and integer exponents of small degree,
see~\cite{romanovski:shafer:2009}.


\subsection*{Acknowledgments}

BB and SM were supported by the Austrian Science Fund (FWF), project P28406.
GR was supported by the FWF, project P27229.

\subsection*{Supplementary material}

We provide a Maple worksheet
containing (i) the program from \cite{Kuznetsova:2012}
for the computation of the first two focal values
and (ii) the case distinction described in Section~\ref{subsec:case}.

The material is available at \url{http://gregensburger.com/softw/s-systems/}.

\appendix

\subsection*{Appendix A: S-systems as generalized mass-action systems}

Every planar S-system can be specified as a generalized mass-action system
in terms of~\cite{mueller:regensburger:2014} (based on \cite{mueller:regensburger:2012}).
In particular, it arises from a directed graph containing two connected components with two vertices and two edges each,
\begin{align*}
1 \arr{}{} 2, \\
3 \arr{}{} 4.
\end{align*}
To each vertex, one assigns a stoichiometric complex (either the zero complex~$\cZ$ or one of the molecular species~$\cX_1$ and $\cX_2$),
in particular, one specifies the reversible reactions
\begin{align*}
\cZ \arr{}{} \cX_1, \\
\cZ \arr{}{} \cX_2,
\end{align*}
representing the production and consumption of $\cX_1$ and $\cX_2$.

To each vertex, one further assigns a {\em kinetic-order} complex (a formal sum of the molecular species),
thereby determining the exponents in the power-law reaction rates,
and to each edge, one assigns a positive rate constant.
One obtains
\begin{align} \label{gmas}
g_{11} \cX_1 + g_{12} \cX_2 \cdts \cZ \arr{\al_1}{\be_1} \cX_1 \cdts h_{11} \cX_1 + h_{12} \cX_2 , \\
g_{21} \cX_1 + g_{22} \cX_2 \cdts \cZ \arr{\al_2}{\be_2} \cX_2 \cdts h_{21} \cX_1 + h_{22} \cX_2 , \nonumber
\end{align}
implying the reaction rates $v_{\cZ \to \cX_1} = \al_1 \, x_1^{g_{11}} x_2^{g_{12}}$, $v_{\cX_1 \to \cZ} = \be_1 \, x_1^{h_{11}} x_2^{h_{12}}$, etc.

The resulting S-system is given by
\begin{align*}
\dot x_1 &= \al_1 \, x_1^{g_{11}} x_2^{g_{12}} - \be_1 \, x_1^{h_{11}} x_2^{h_{12}}  , \\
\dot x_2 &= \al_2 \, x_1^{g_{21}} x_2^{g_{22}} - \be_2 \, x_1^{h_{21}} x_2^{h_{22}}  \nonumber
\end{align*}
with $\alpha_1, \alpha_2, \beta_1, \beta_2 \in \mathbb{R}_+$ and $g_{11},g_{12},g_{21},g_{22},h_{11},h_{12},h_{21},h_{22} \in \R$.

For mass-action systems, the deficiency (a nonnegative integer) plays a crucial role in the analysis of the dynamical behaviour.
For example, if the deficiency is zero, then periodic solutions are not possible.
For the {\em generalized} mass-action system~\eqref{gmas},
the stoichiometric deficiency~\cite{mueller:regensburger:2014} is given by
\[
\delta = 4-2-2 = 0 ,
\]
since there are $4$ vertices and $2$ connected components in the graph and the stoichiometric subspace has dimension $2$.
In contrast to mass-action systems with deficiency zero, this system gives rise to rich dynamical behaviour.

Analogously, every $n$-dimensional S-system can be specified as a generalized mass-action system in terms of~\cite{mueller:regensburger:2014} with deficiency zero.
In fact, every generalized mass-action (GMA) system in terms of biochemical systems theory (BST) can be specified as a generalized mass-action system in terms of~\cite{mueller:regensburger:2014}.
More specifically, every power-law dynamical system arises from a generalized chemical reaction network,
that is, a digraph without self-loops and two functions  assigning to each vertex a stoichiometric complex and to each source vertex a kinetic-order complex.
Thereby, complexes need not be different, as in the case of the zero complex $\cZ$ in the generalized mass-action system~\eqref{gmas}.

%
%


\subsection*{Appendix B: Figures}

In the following figures,
we illustrate our analysis of the ODE \eqref{ode:exp}.
Thereby, the red line is the $u$-nullcline, $a_1 u + b_1 v = a_2 u + b_2 v$, while the green line is the $v$-nullcline, $a_3 u + b_3 v = a_4 u + b_4 v$.

Figures \ref{fig:streamplots_very_stable_region}, \ref{fig:boundedness_proof}, \ref{fig:lyap_level_sets}, and \ref{fig:lyap_made_bounded} are illustrations of the proof of Lemma \ref{lem:bounded_solutions} on the boundedness of the solutions of the ODE \eqref{ode:exp}. Figure \ref{fig:glob_center_proof} supports the proof of Theorem \ref{thm:global_center} on the characterization of global centers. 

\begin{figure}
\begin{center}
\begin{tabular}{ccc}
\includegraphics[scale=.30]{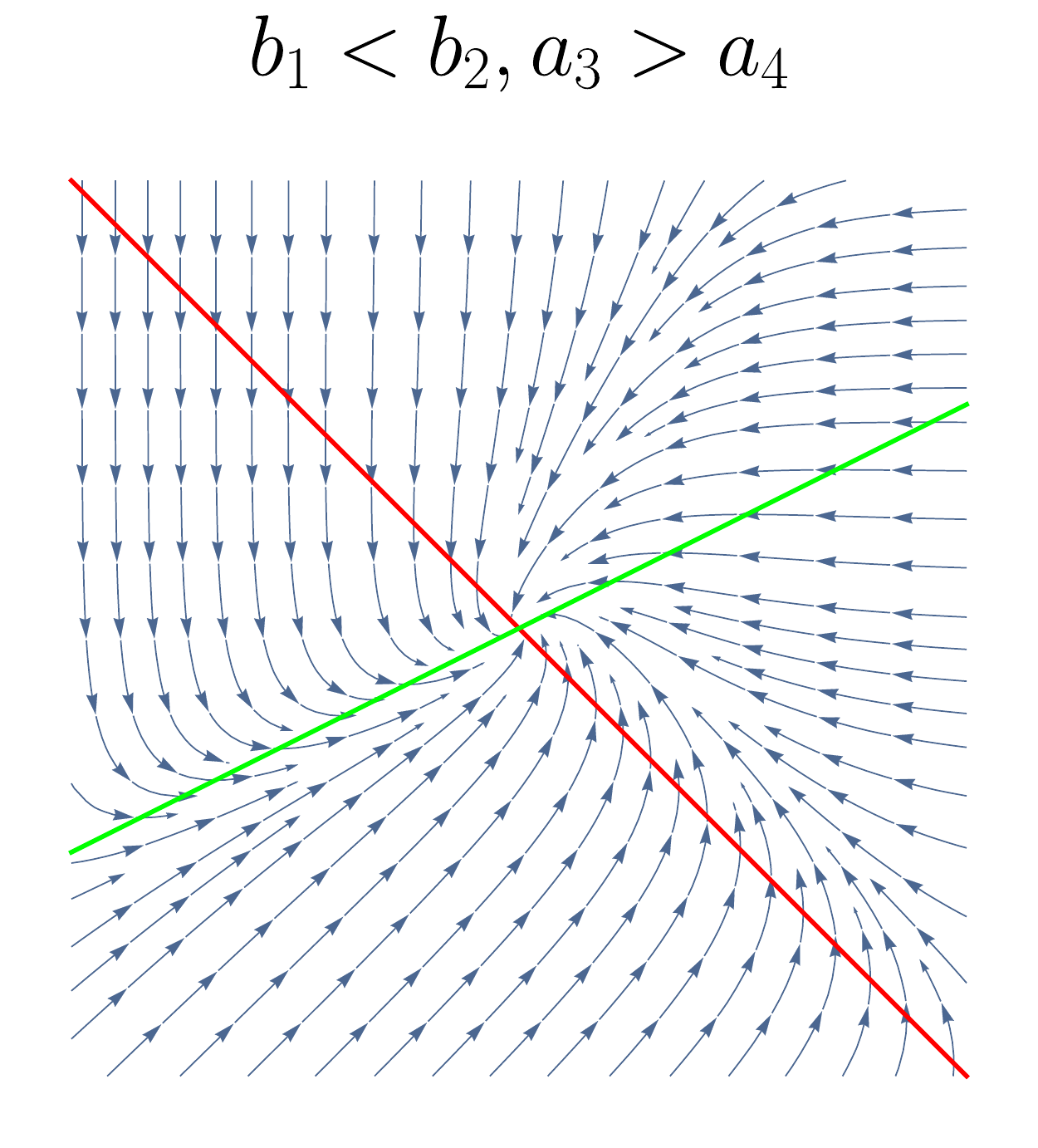} &
\includegraphics[scale=.30]{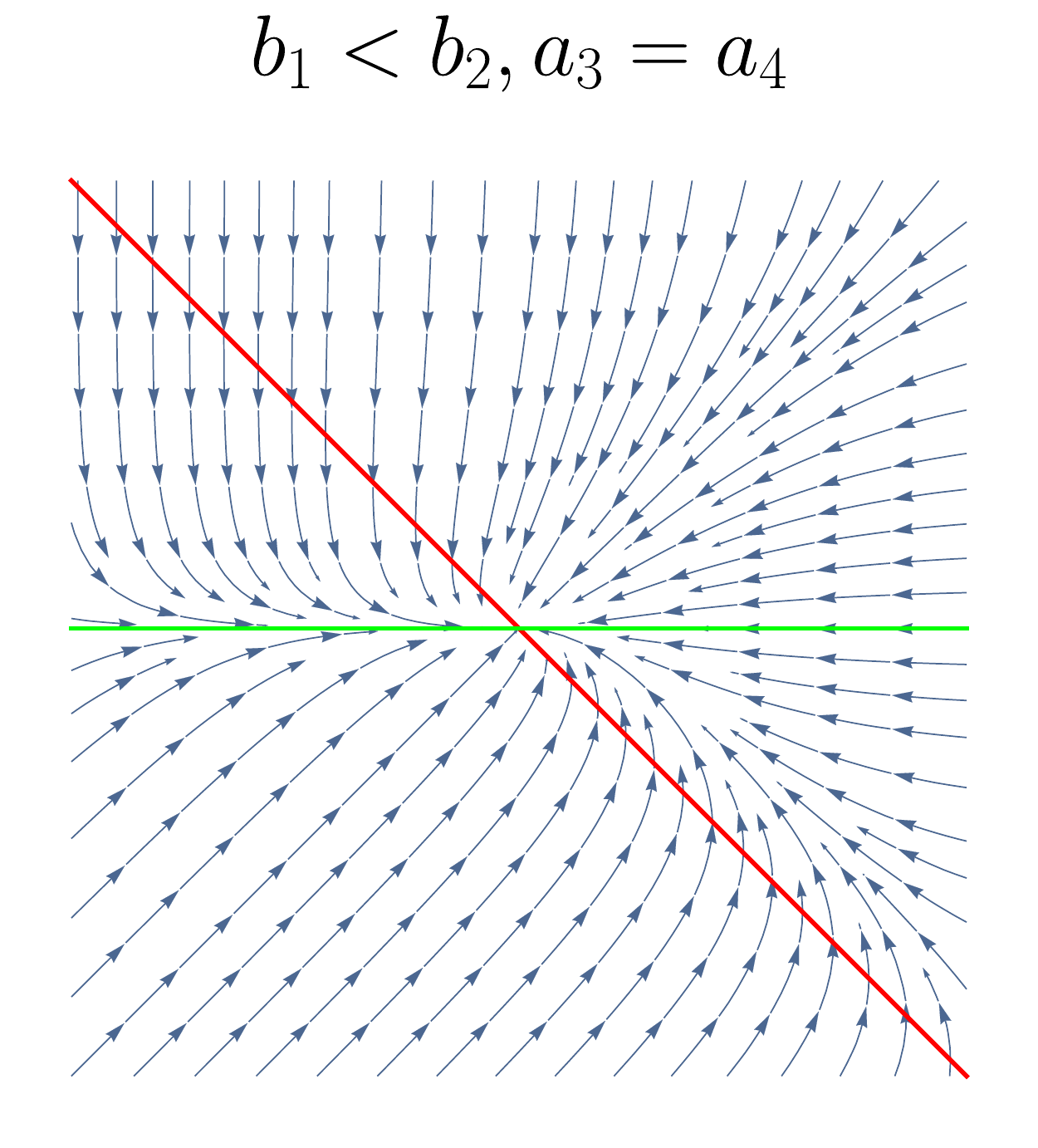} &
\includegraphics[scale=.30]{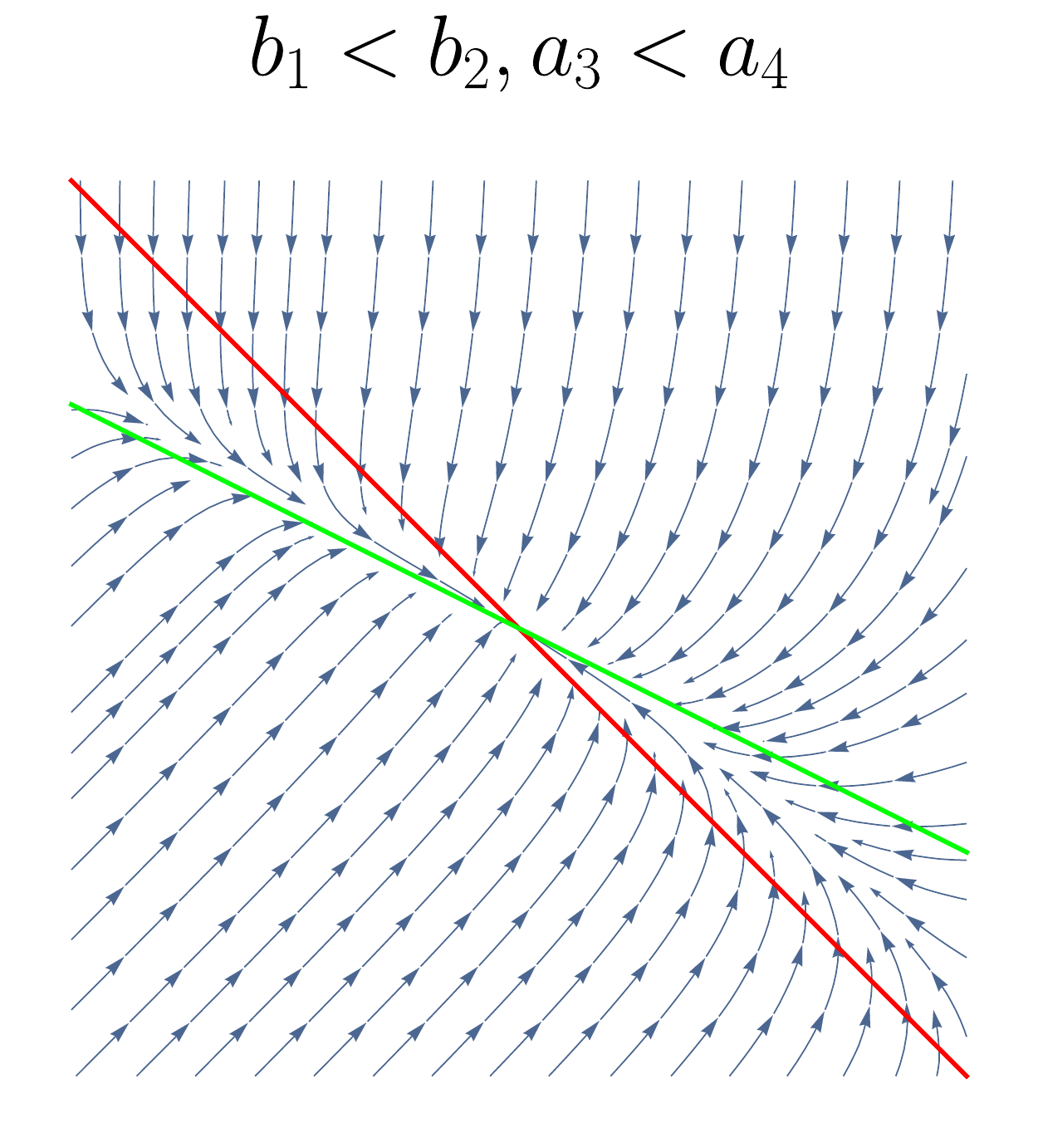} \\
\includegraphics[scale=.30]{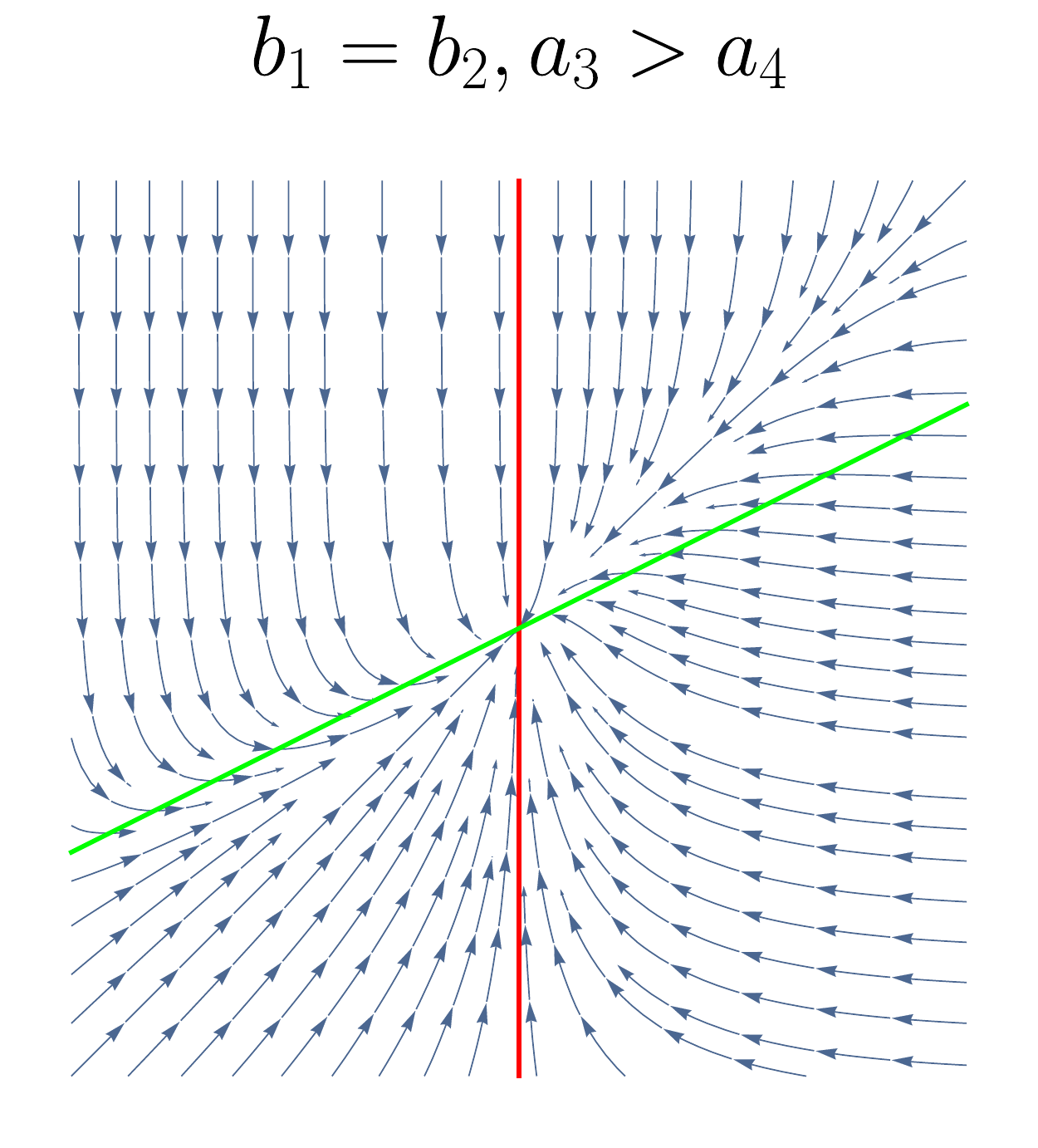} &
\includegraphics[scale=.30]{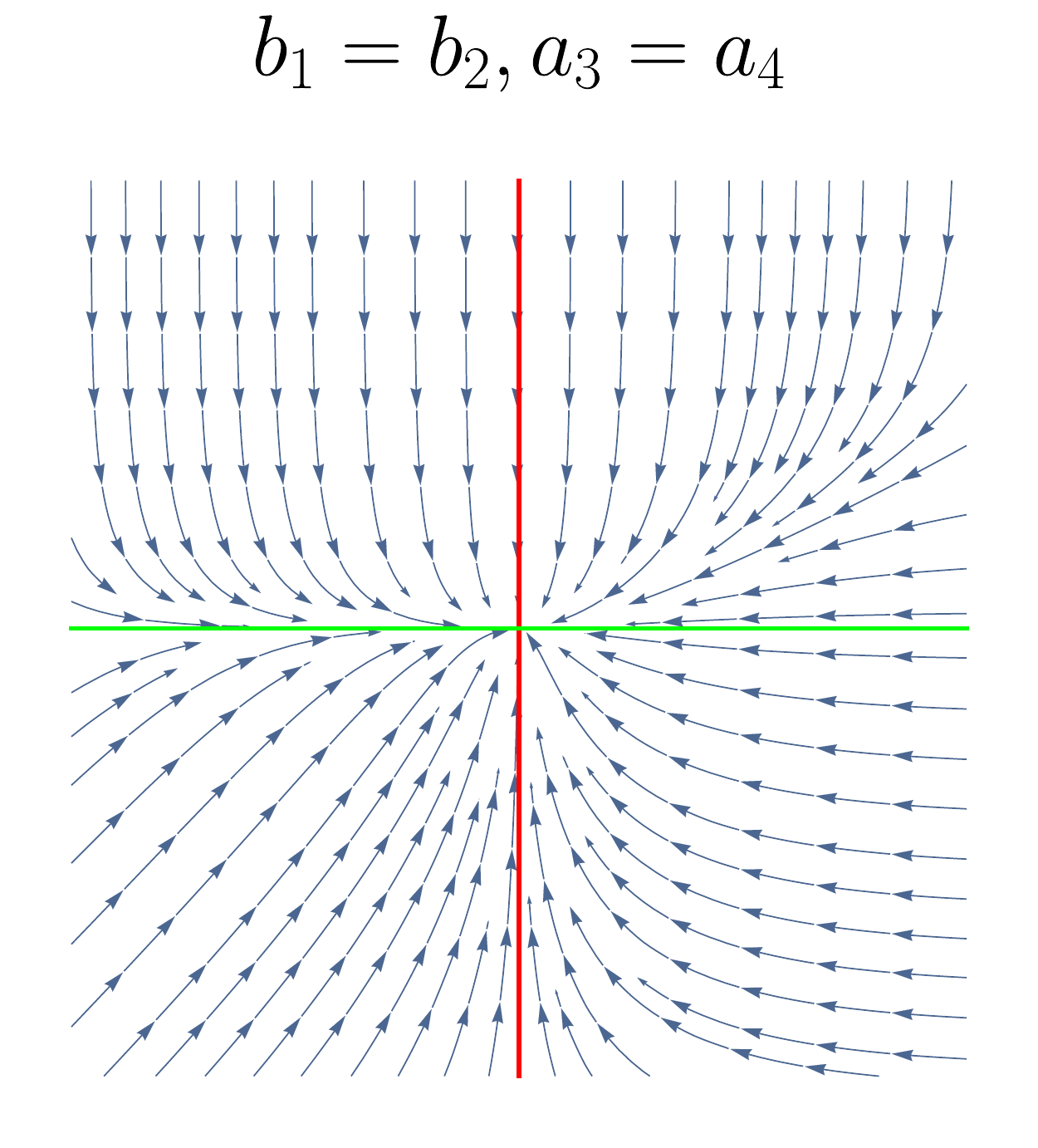} &
\includegraphics[scale=.30]{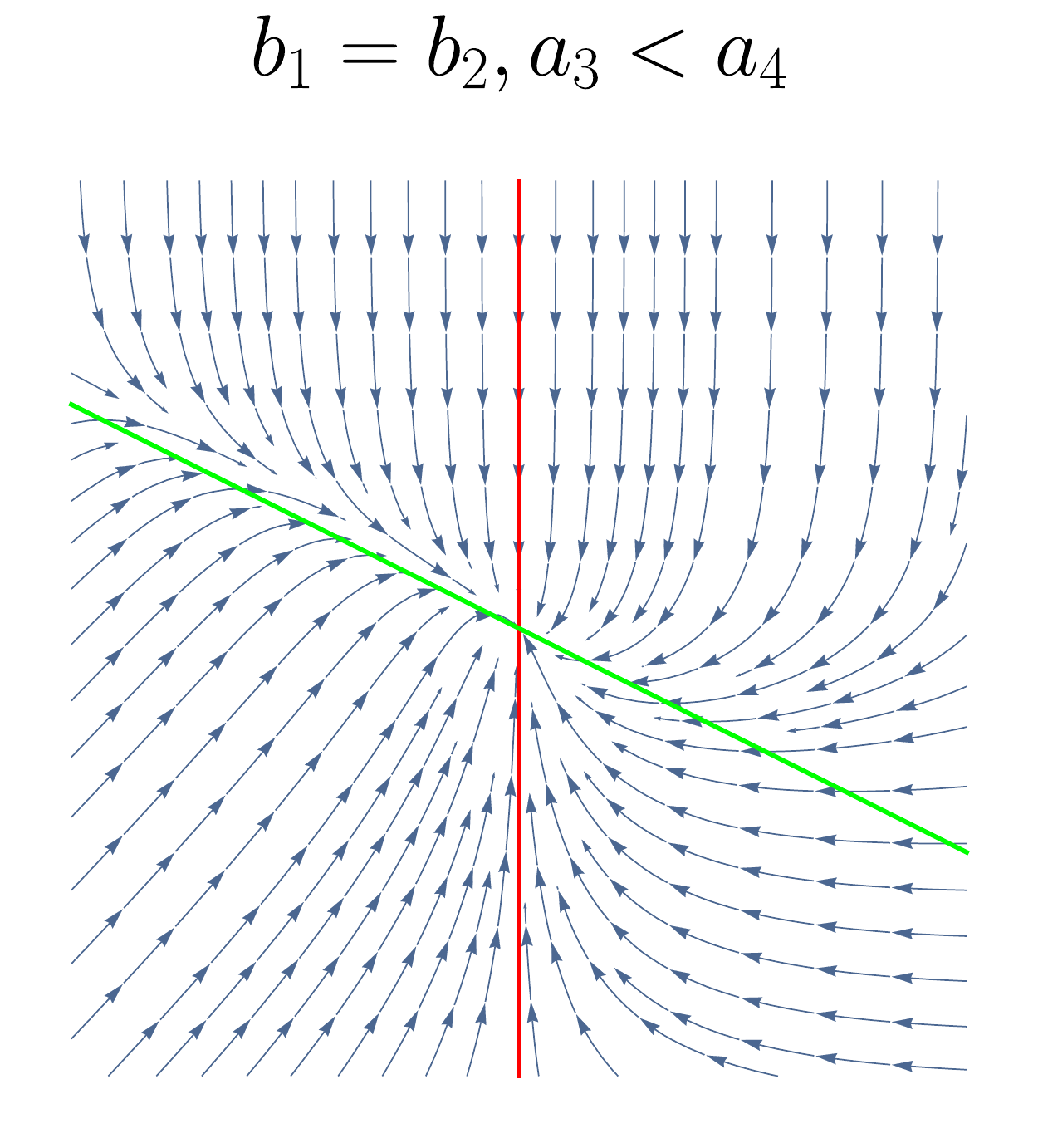} \\
\includegraphics[scale=.30]{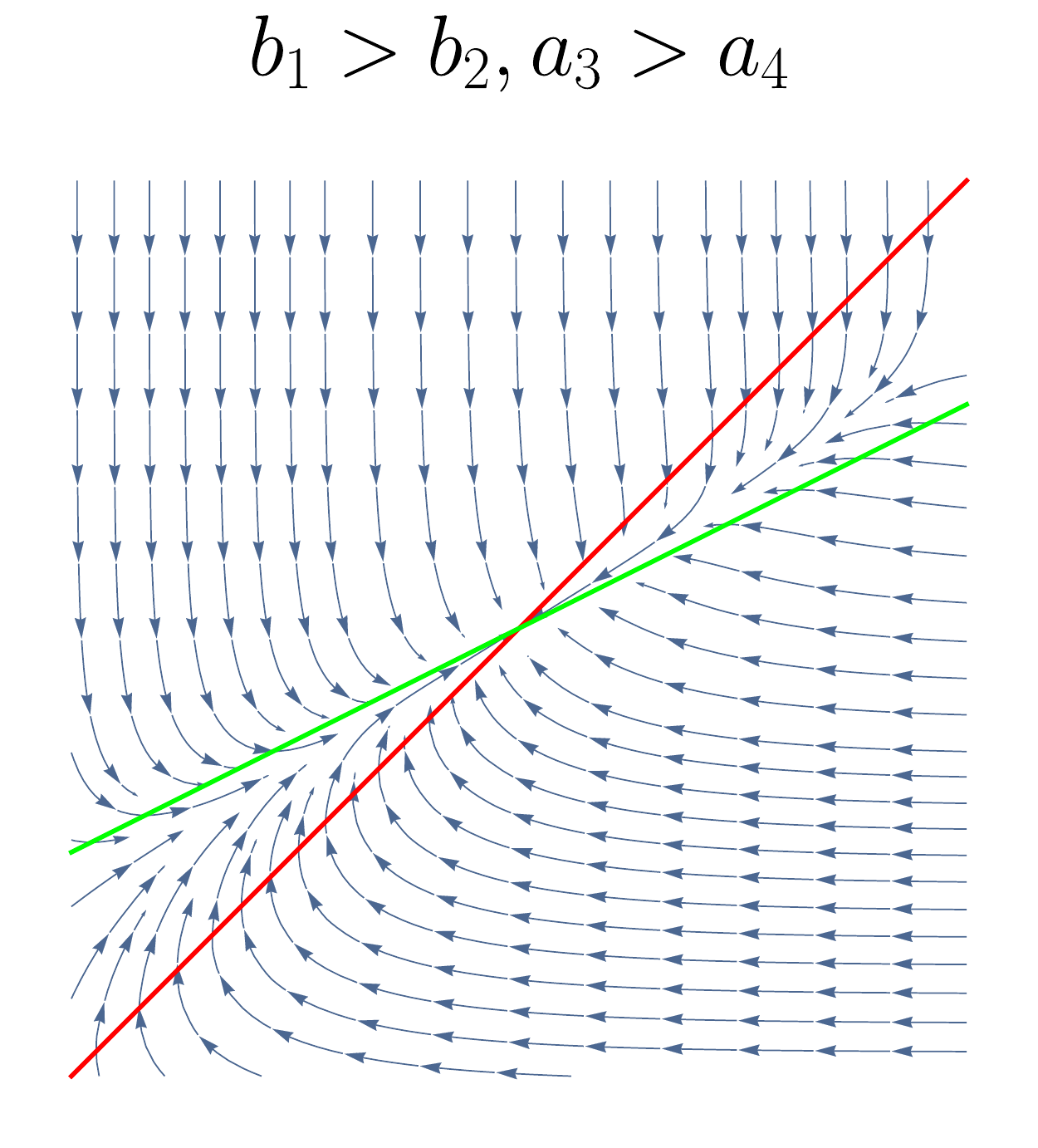} &
\includegraphics[scale=.30]{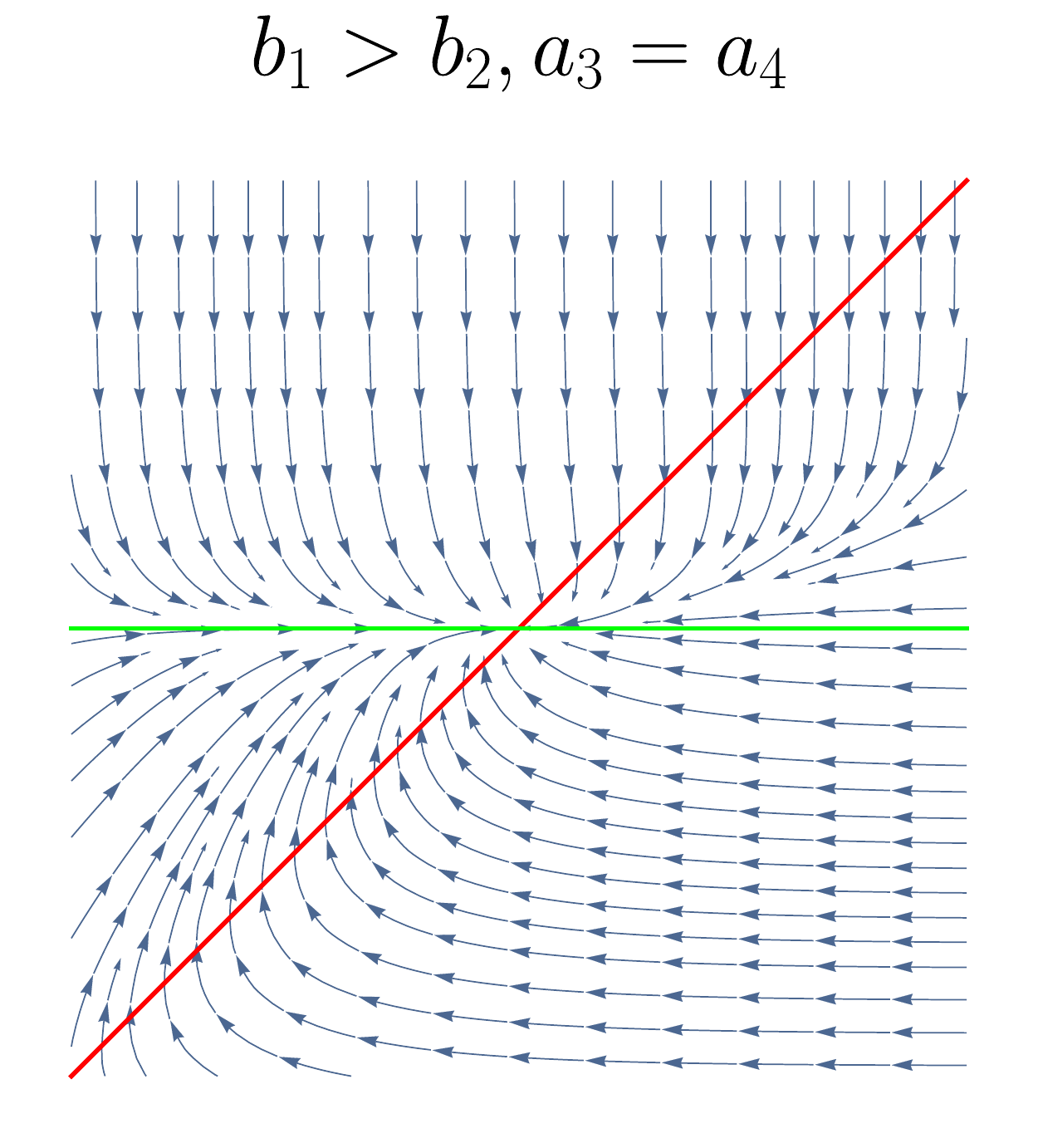} &
\includegraphics[scale=.30]{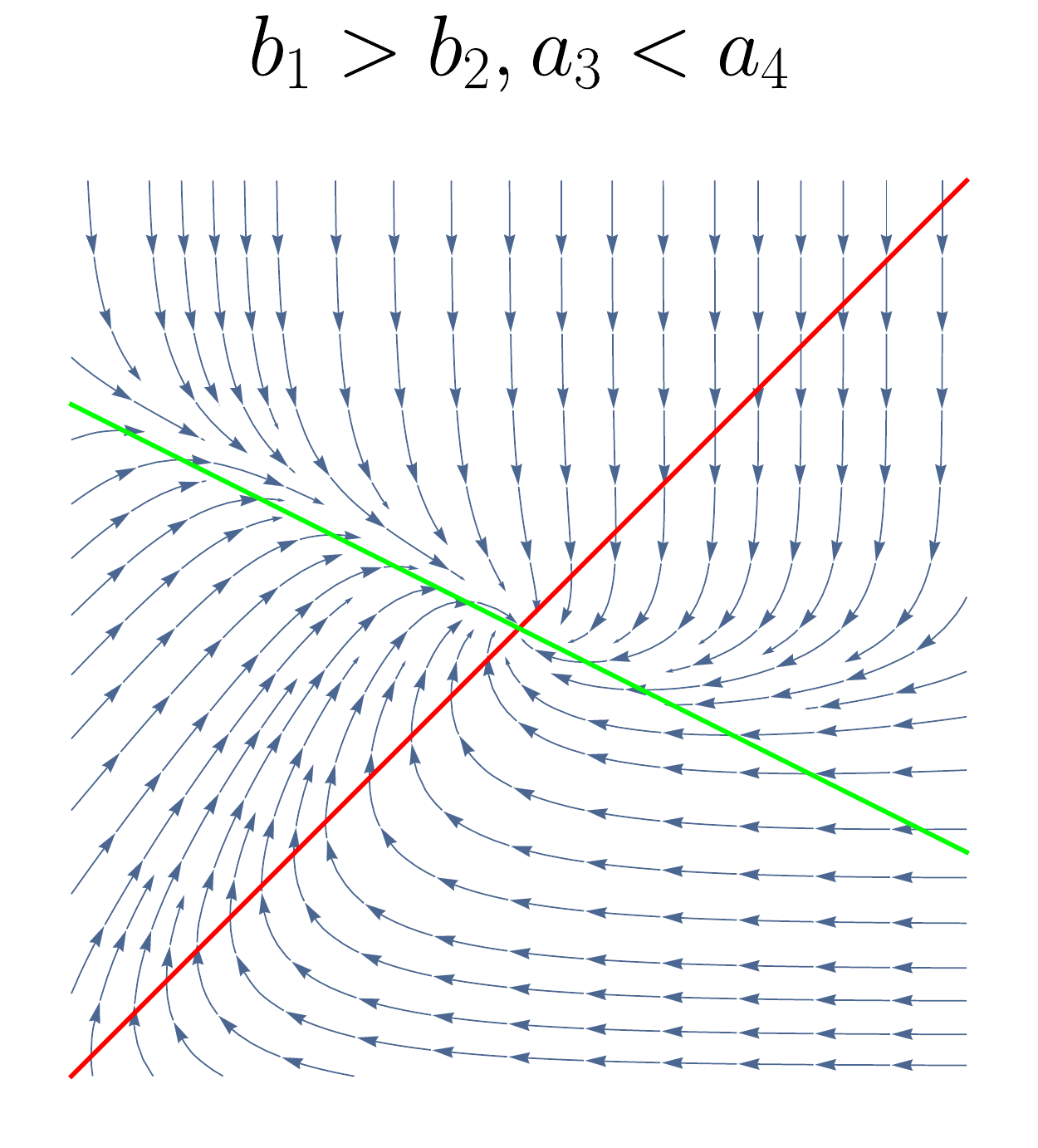} \\
\end{tabular}
\end{center}
\caption{Phase portraits of the ODE \eqref{ode:exp} in case $\det J > 0$ and both of the diagonal entries of $J$ are negative. As claimed in Lemma~\ref{lem:divergence}~(a), all solutions are bounded in positive time.
Seven cases are ultimately monotonic, the remaining two (top left and bottom right) can spiral, but only inwards.}
\label{fig:streamplots_very_stable_region}
\end{figure}

\clearpage

\begin{figure}[h]
\begin{center}
\begin{tabular}{c}
\includegraphics[scale=.49]{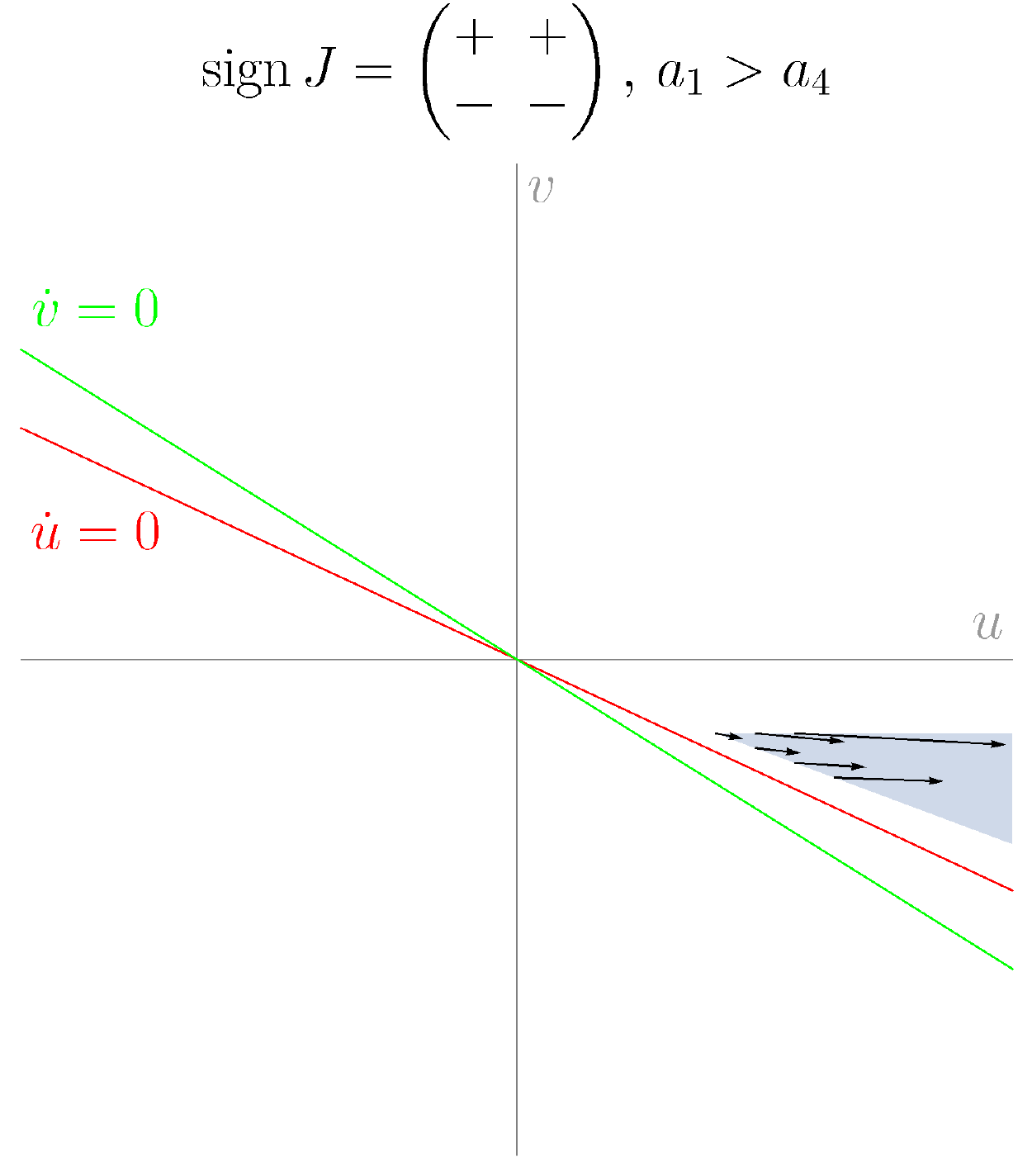} \\
\includegraphics[scale=.49]{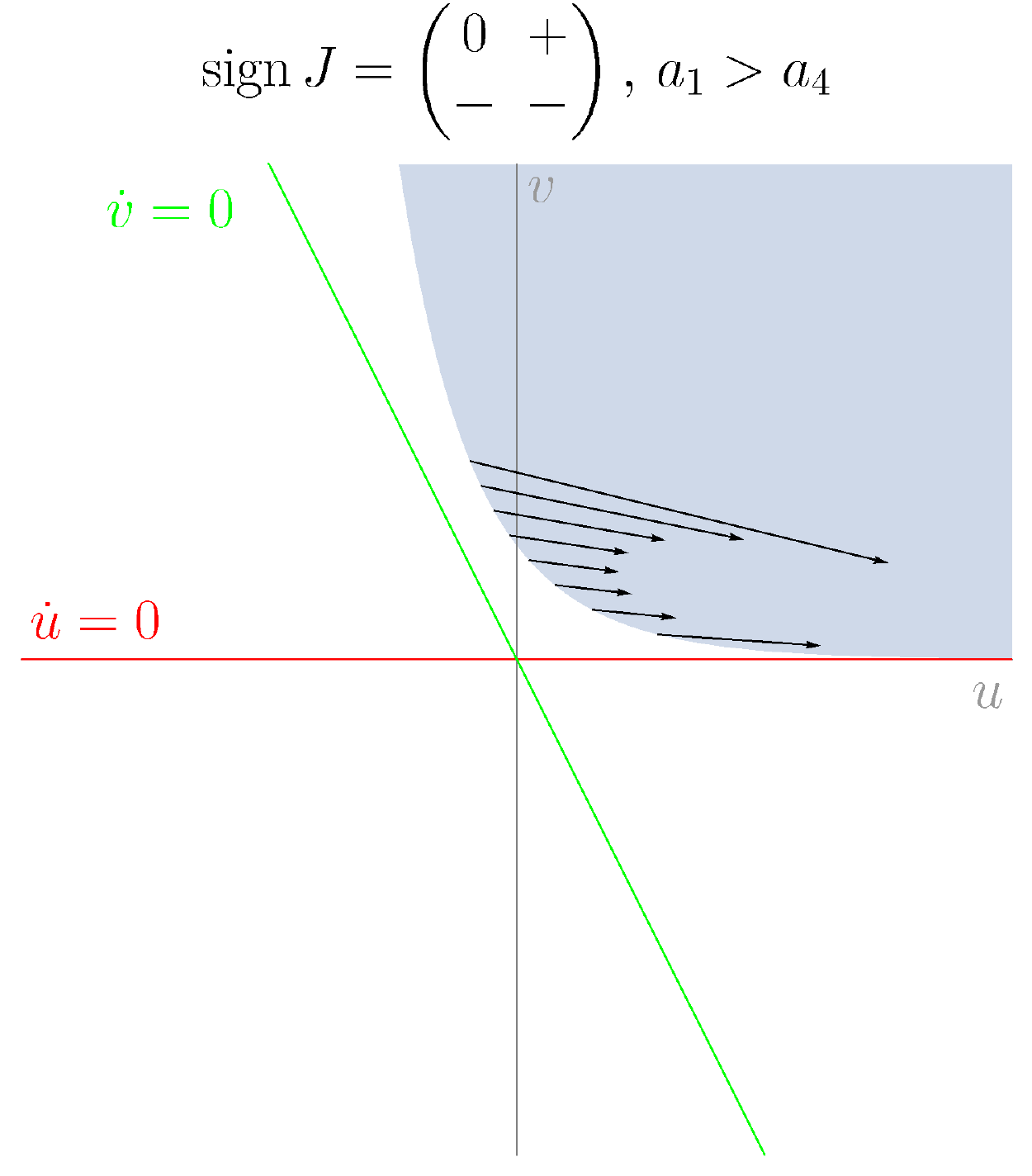} \\
\end{tabular}
\end{center}
\caption{The forward invariant sets used in the proofs of Lemma~\ref{lem:bounded_solutions}~(b1) and (b2), respectively,
to show the necessity of $a_3 \leq a_2 < a_1 \leq a_4$ (top panel) and $a_3 \leq a_2 = a_1 \leq a_4$ (bottom panel)
for the boundedness of the solutions of the ODE~\eqref{ode:exp}.}
\label{fig:boundedness_proof}
\end{figure}

\clearpage

\begin{figure}
\begin{center}
\begin{tabular}{ccc}
\includegraphics[scale=.30]{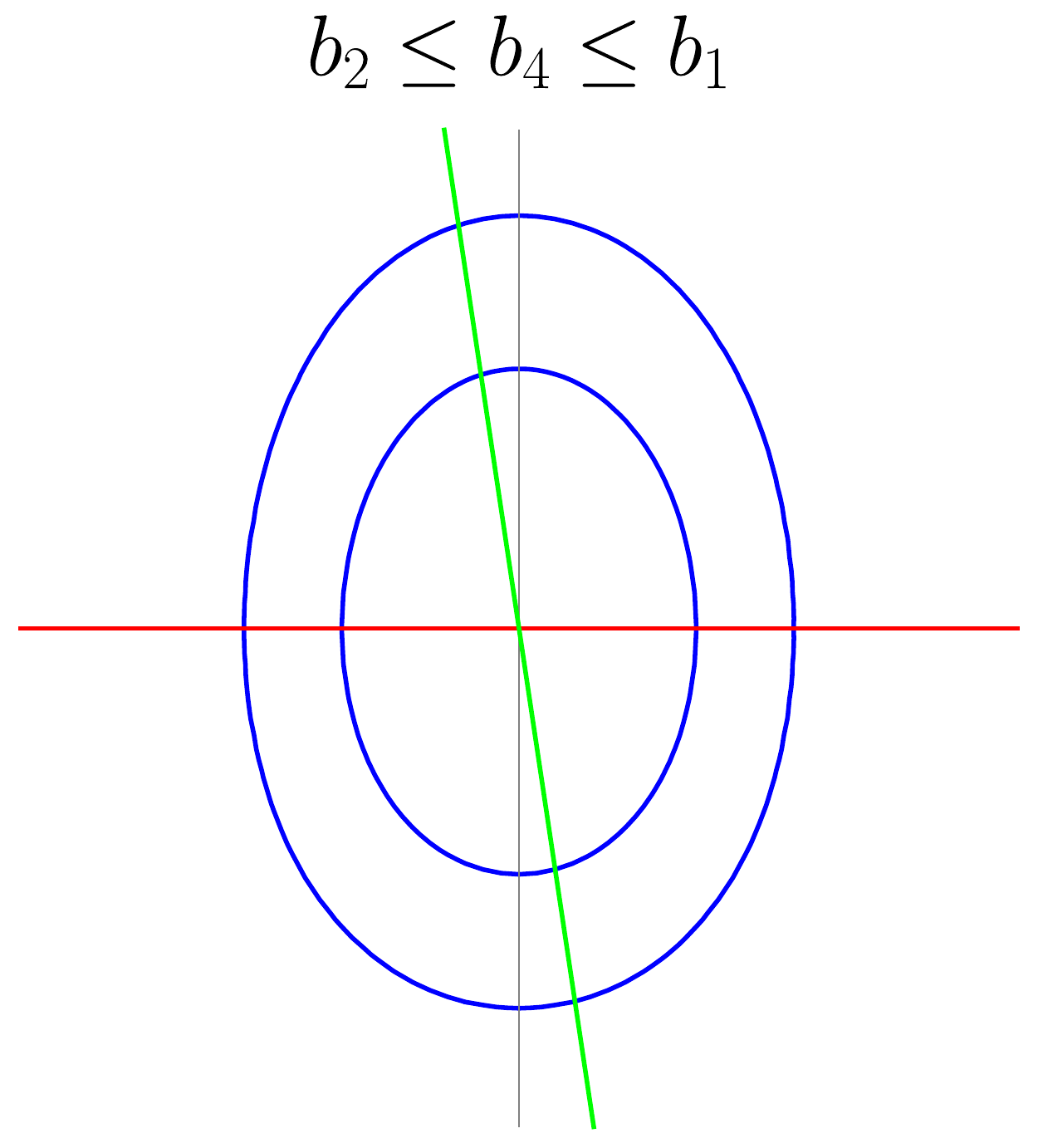} &
\includegraphics[scale=.30]{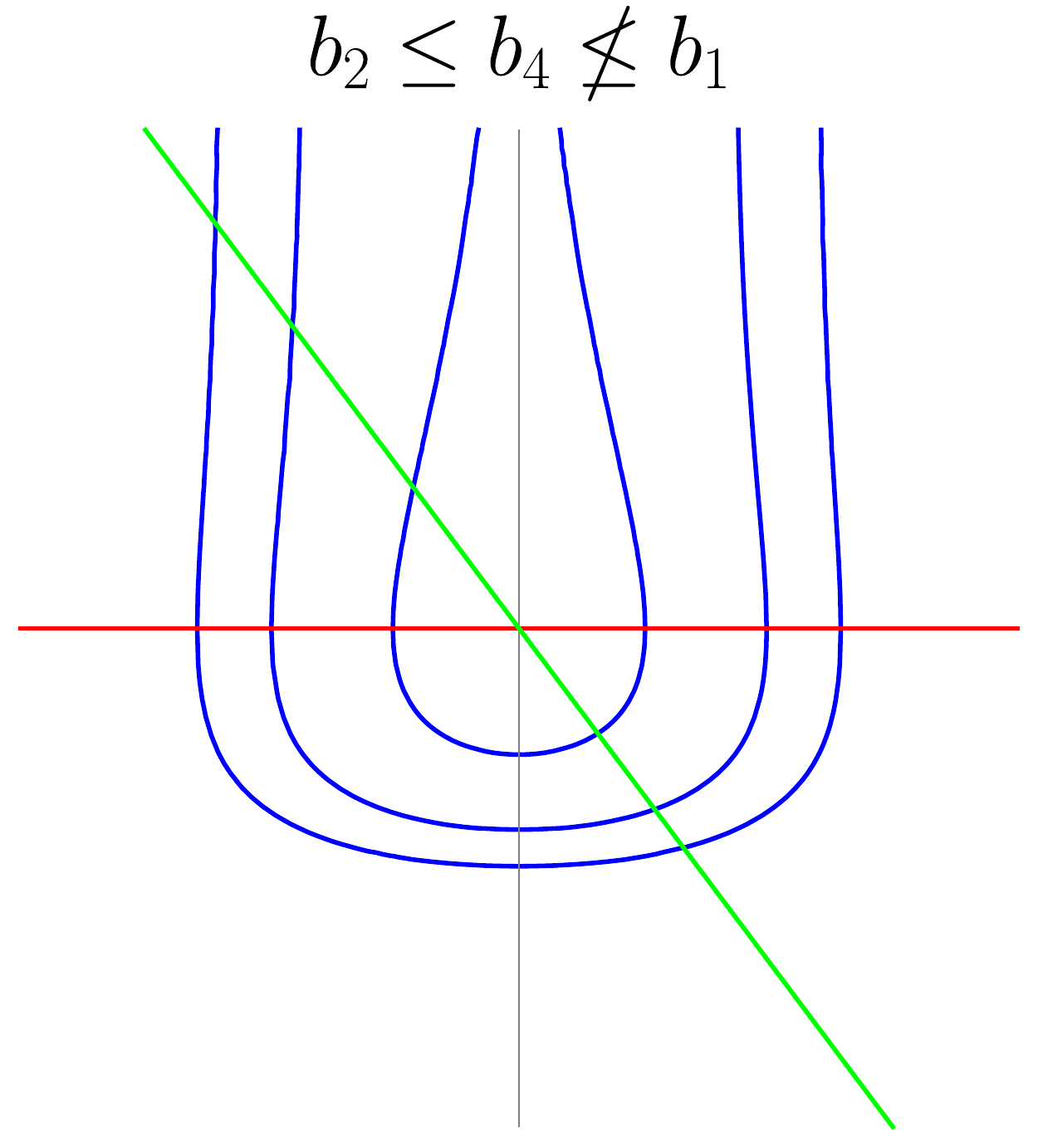} &
\includegraphics[scale=.30]{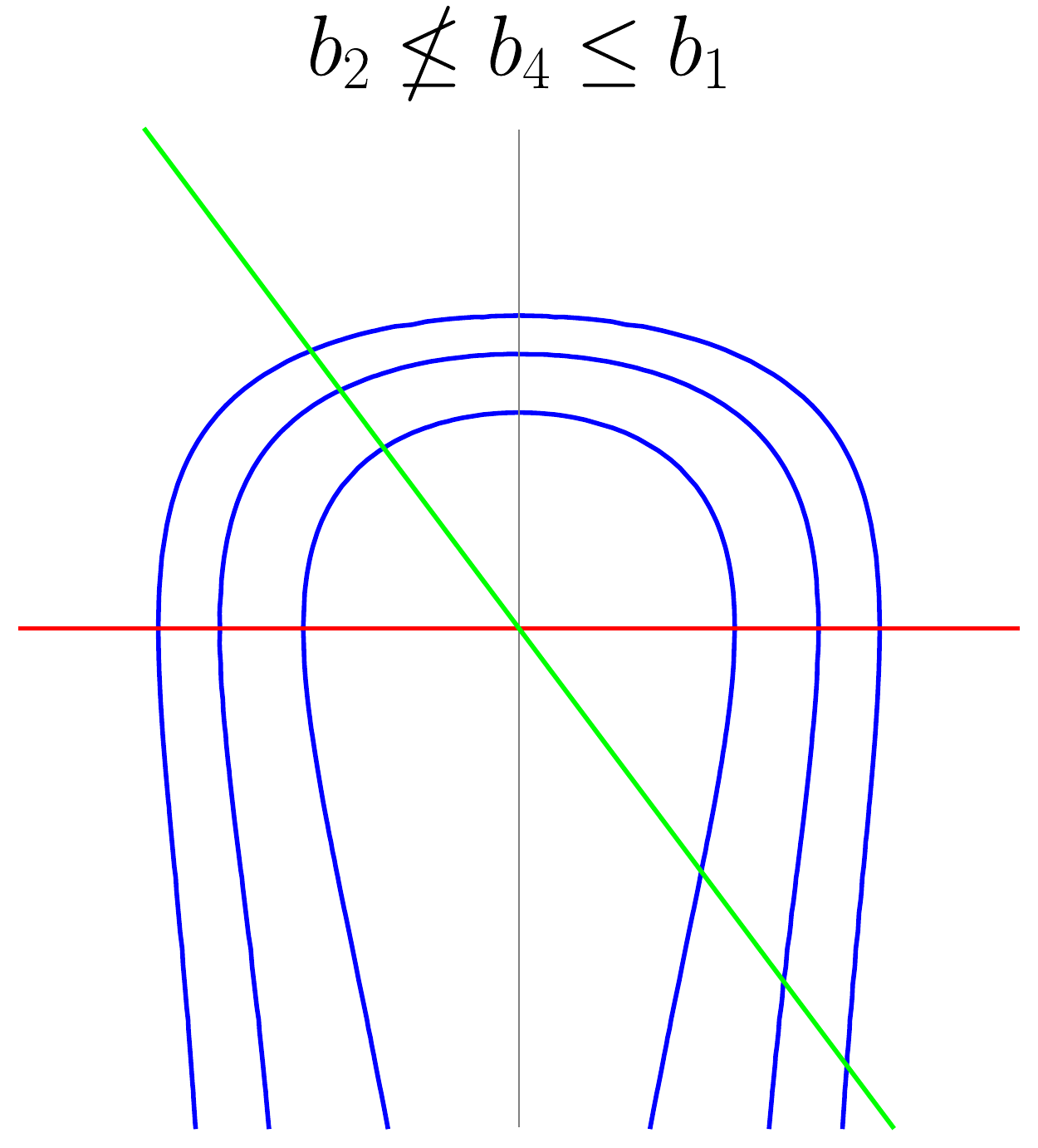} \\
\end{tabular}
\end{center}
\caption{The level sets of the Lyapunov function used to show the sufficiency of $a_3 \leq a_2 = a_1 \leq a_4$ for the boundedness of the solutions of the ODE~\eqref{ode:exp} in Lemma~\ref{lem:bounded_solutions} (b2).}
\label{fig:lyap_level_sets}
\end{figure}

\begin{figure}
\begin{center}
\begin{tabular}{ccc}
\includegraphics[scale=.30]{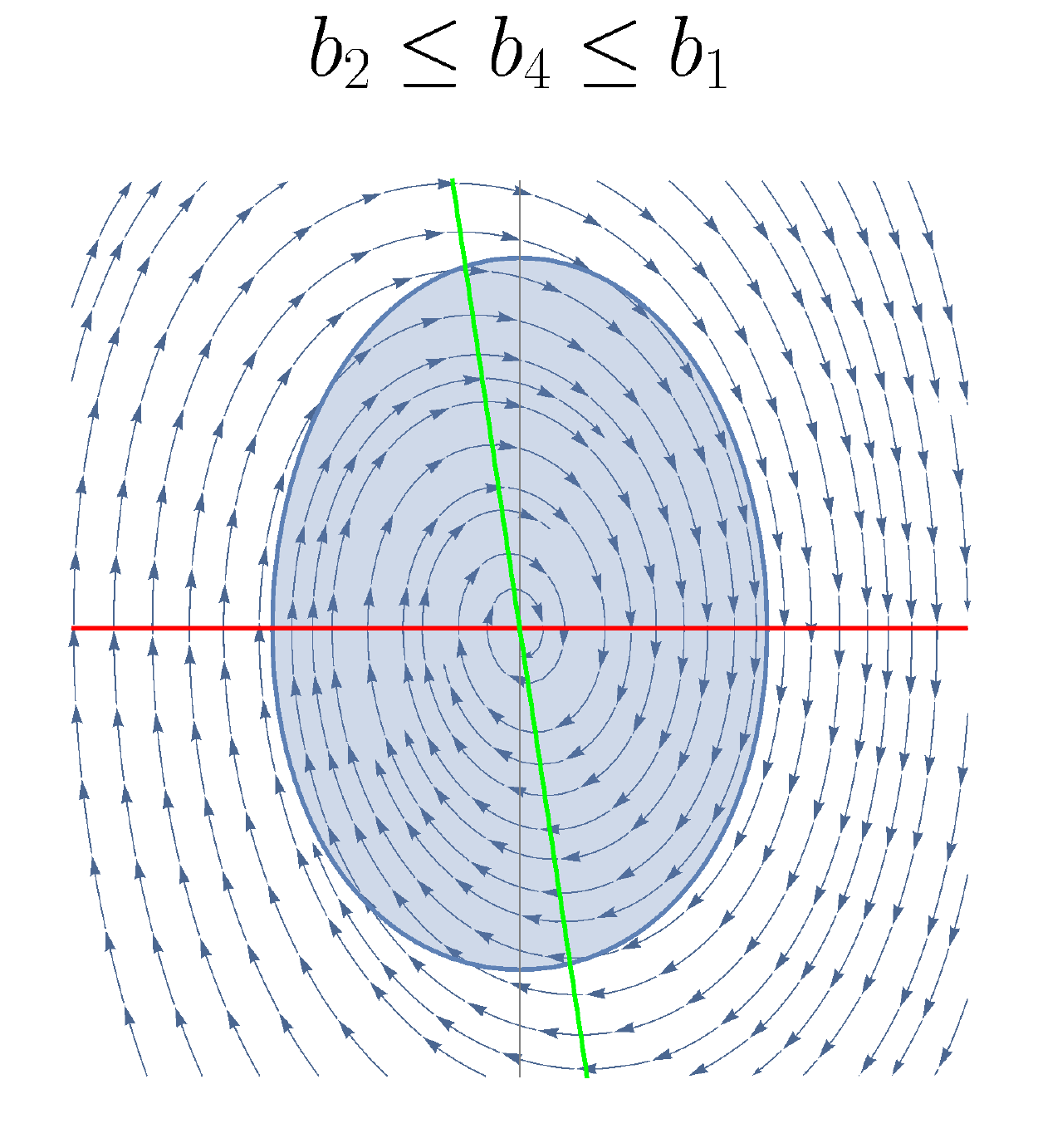} &
\includegraphics[scale=.30]{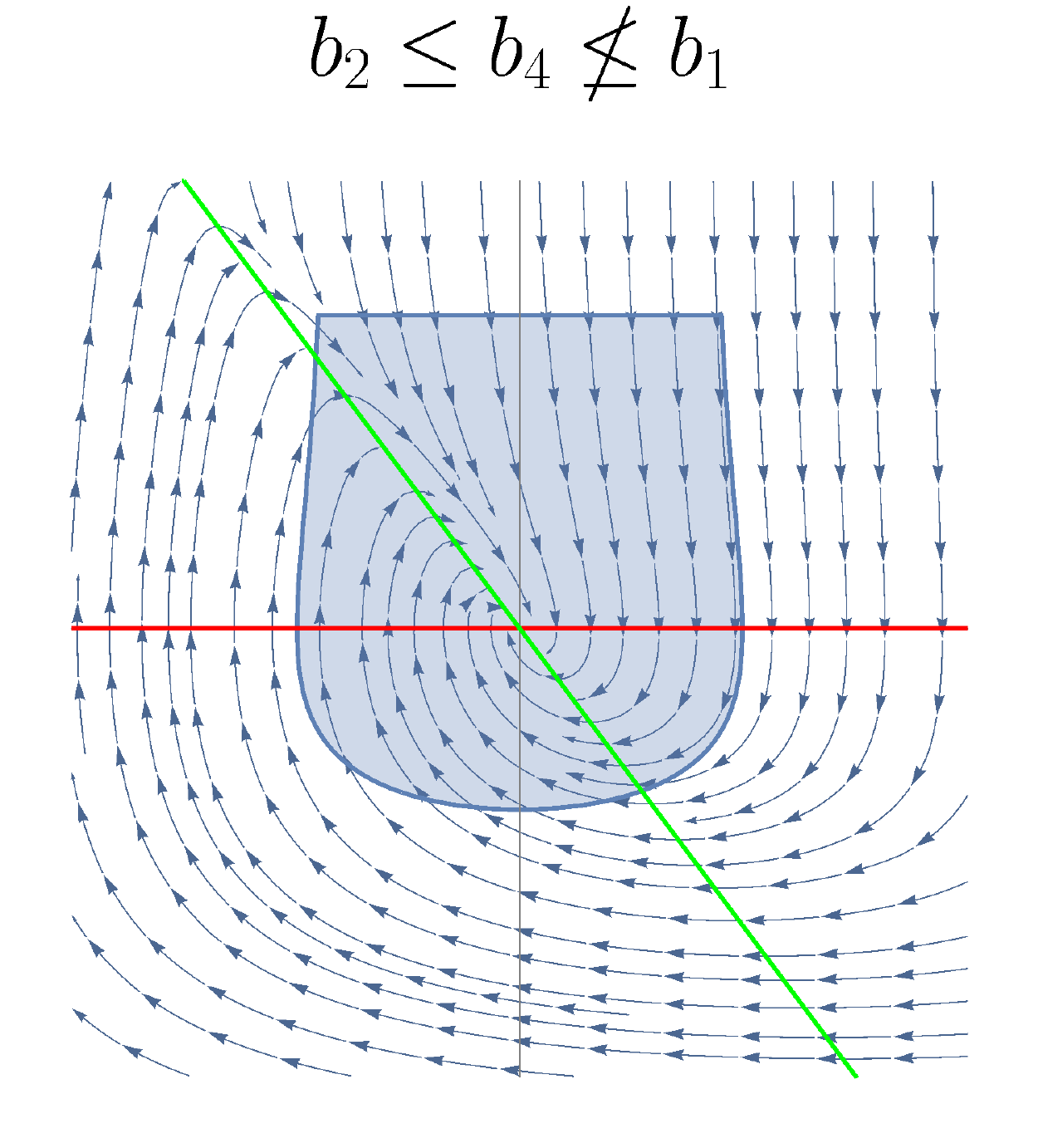} &
\includegraphics[scale=.30]{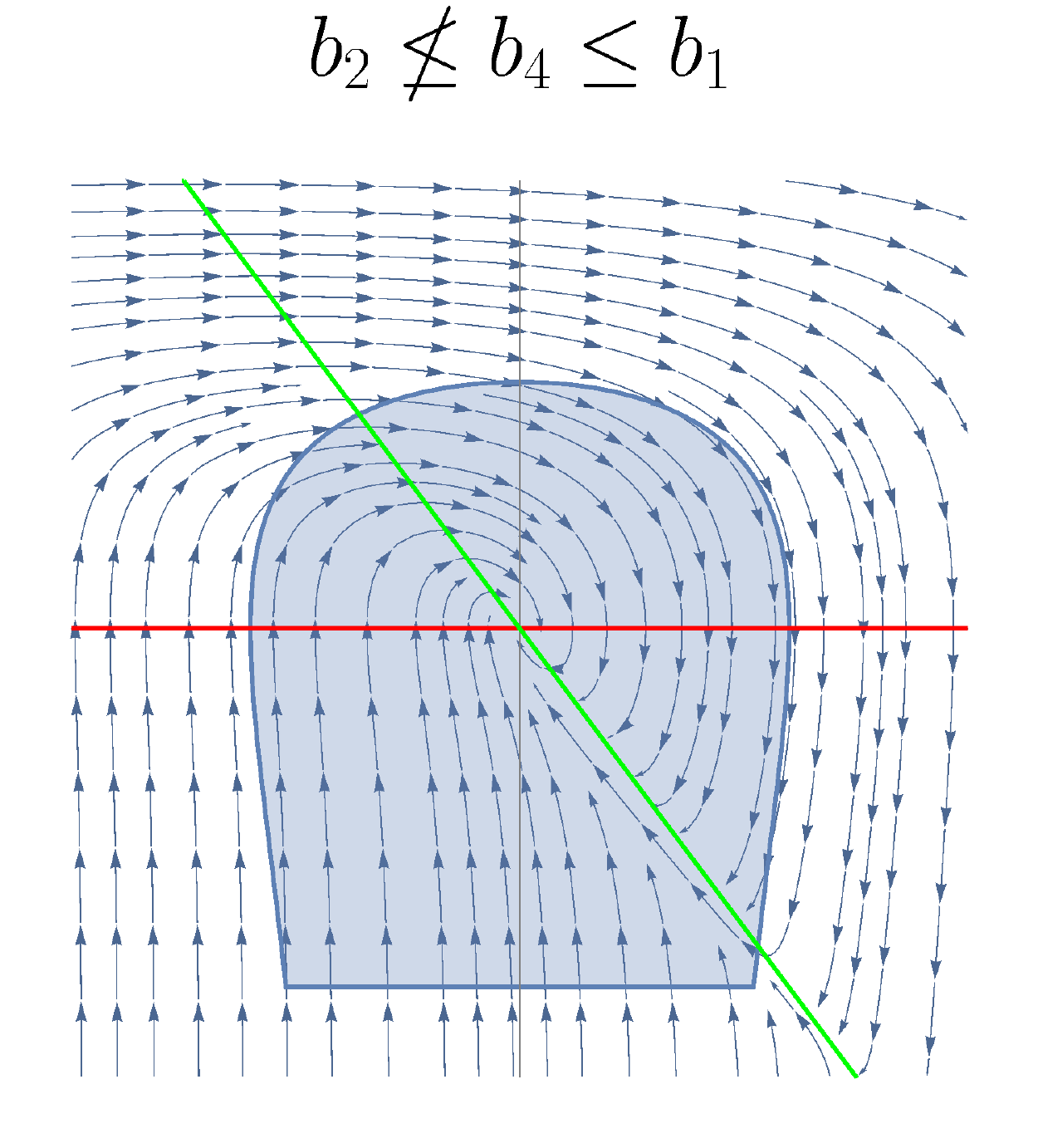} \\
\end{tabular}
\end{center}
\caption{The bounded forward invariant sets used to show the sufficiency of $a_3 \leq a_2 = a_1 \leq a_4$ for the boundedness of the solutions of the ODE~\eqref{ode:exp} in Lemma~\ref{lem:bounded_solutions} (b2).}
\label{fig:lyap_made_bounded}
\end{figure}

\clearpage

\begin{figure}
\begin{center}
\begin{tabular}{c}
\includegraphics[scale=.6]{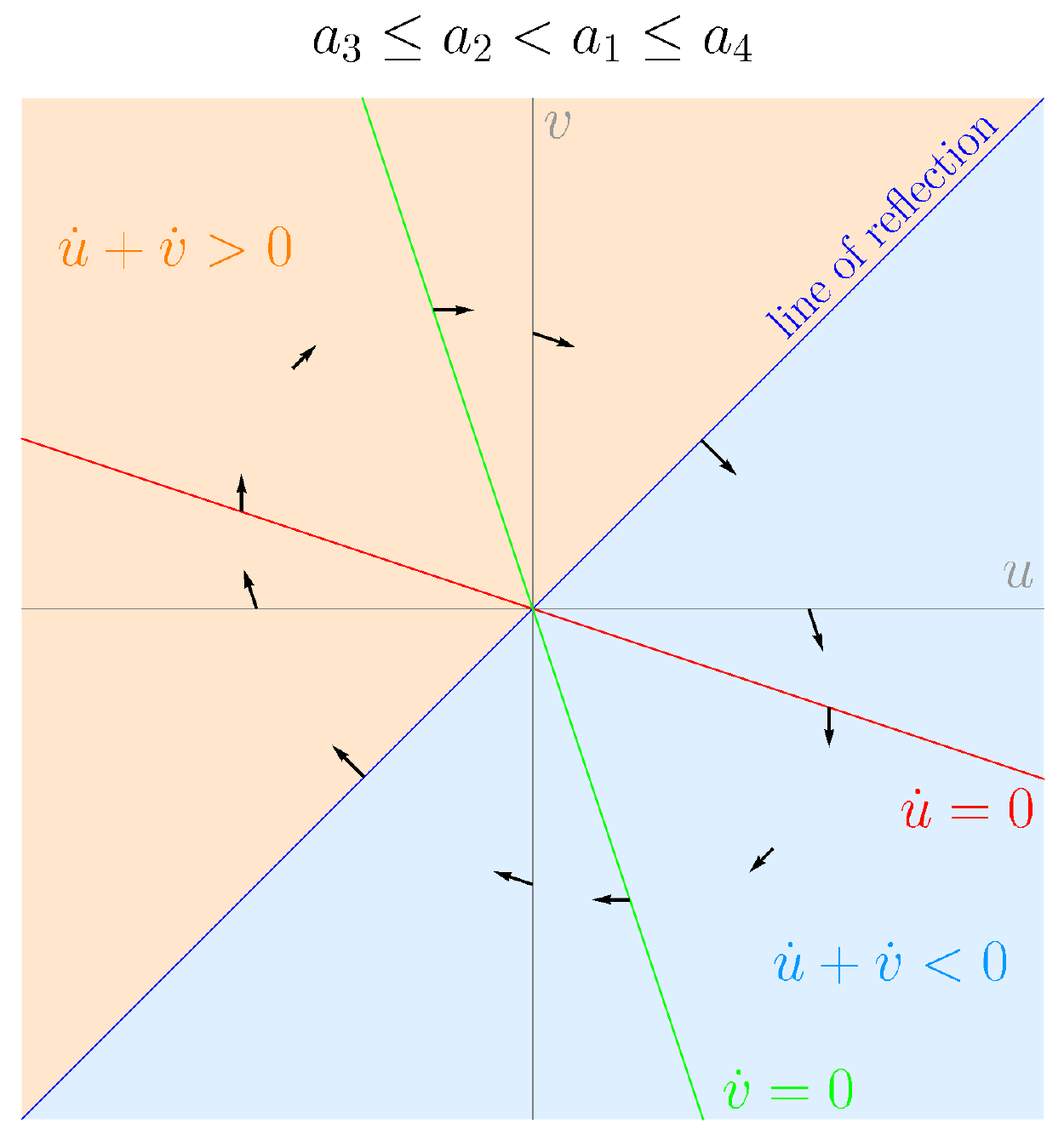} \\
\includegraphics[scale=.6]{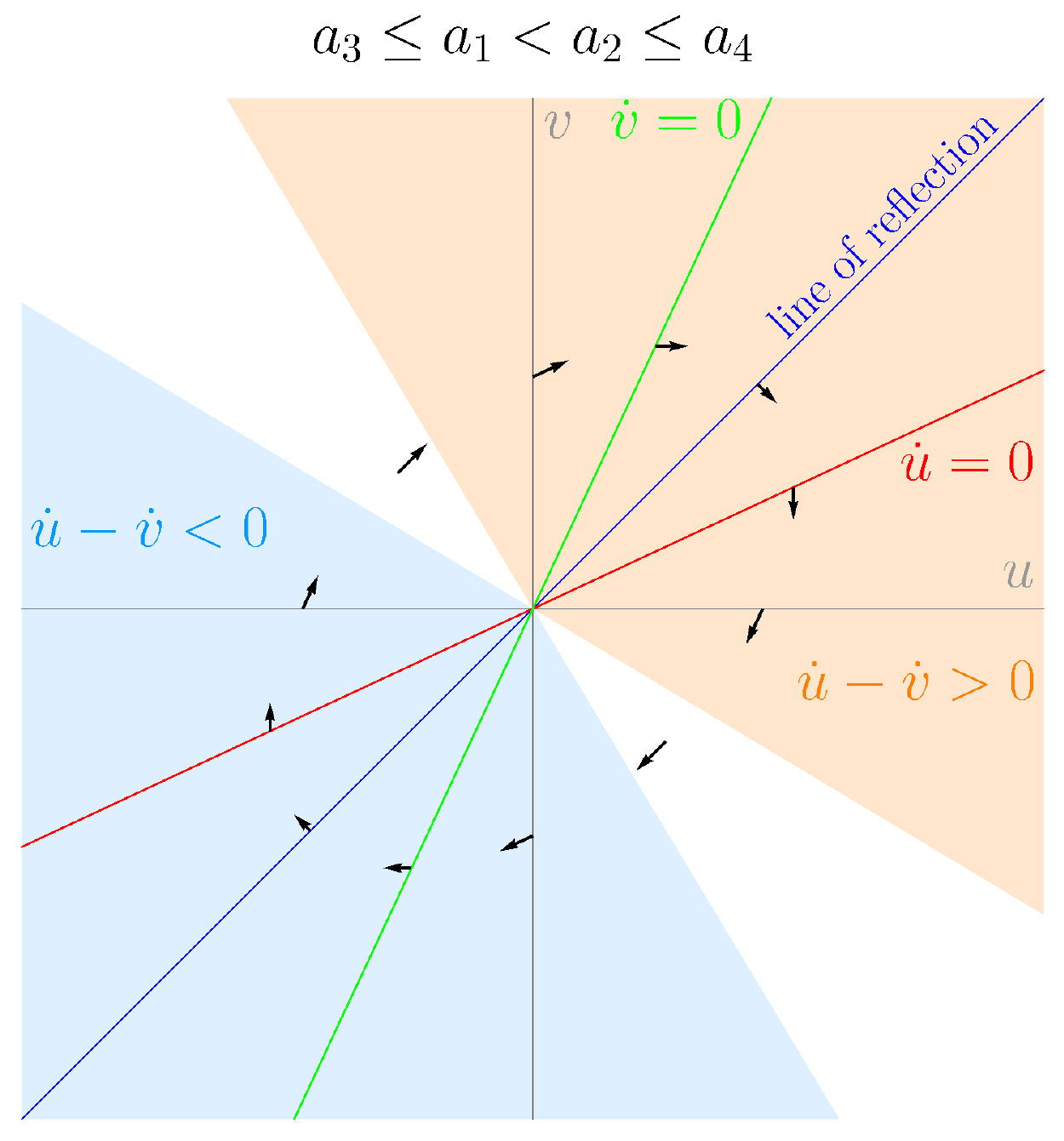} \\
\end{tabular}
\end{center}
\caption{Illustration of the proof of Theorem \ref{thm:global_center}, case R1,
to show the sufficiency of $a_3 \leq a_2 < a_1 \leq a_4$ (and $a_3 \leq a_1 < a_2 \leq a_4$, respectively) for the origin being a global center of the ODE~\eqref{ode:exp}. 
Both panels display the nullcline geometry, the sign structure of the vector field, the line of reflection, and the signs of $\dot u + \dot v$ and $\dot u - \dot v$.}
\label{fig:glob_center_proof}
\end{figure}

%

\clearpage

\bibliographystyle{AIMS}
\bibliography{stability}

\end{document}